\DeclareOldFontCommand{\bf}{\normalfont\bfseries}{\mathbf}
\newtheorem{theorem}{Theorem} %[section]
\newtheorem{proposition}[theorem]{Proposition}
\newtheorem{lemma}[theorem]{Lemma}
\newtheorem{remark}[theorem]{Remark}
\newcommand{\be}{\begin{equation}}
\newcommand{\ee}{\end{equation}}
\newcommand{\bea}{\begin{eqnarray}}
\newcommand{\eea}{\end{eqnarray}}
\newcommand{\barr}{\begin{array}}
\newcommand{\earr}{\end{array}}
\newcommand{\bn}{\begin{enumerate}}
\newcommand{\en}{\end{enumerate}}
\newcommand{\bi}{\begin{itemize}}
\newcommand{\ei}{\end{itemize}}
\newcommand{\bbbm}{\begin{pmatrix}}
\newcommand{\eeem}{\end{pmatrix}}
\newcommand{\f}{\frac}
\newcommand{\R}{{\mathbb R}}
\newcommand{\E}{{\mathbb E}}
\newcommand{\ga}{\gamma}
\newcommand{\de}{\delta}
\newcommand{\ep}{\varepsilon}
\newcommand{\tht}{\theta}
\newcommand{\ignore}[1]{}{}
\newcommand{\nn}{\nonumber}
\newcommand{\p}{{\partial}}
\newcommand{\lt}{\left}
\newcommand{\rt}{\right}
\newcommand{\ot}{\otimes}
\newcommand{\Prob}{{\mathcal P}}
\newcommand{{\QED}}{{\hfill QED} \smallskip}
\newcommand{\et}{\exp}
\newcommand{\grad}{\nabla}
\newcommand{\foe}{\f{1}{\eta}}
\newcommand{\TabFive}[1]{ % 
\begin{tabular}{@{}c@{\hspace{2mm}}c@{\hspace{2mm}}c@{\hspace{2mm}}c@{\hspace{2mm}}c@{}}
 #1
\end{tabular}
}
\definecolor{darkspringgreen}{rgb}{0.09, 0.45, 0.27} 
\definecolor{darkgray}{rgb}{0.66, 0.66, 0.66}
\numberwithin{equation}{section}
\numberwithin{theorem}{section}
\begin{document}

\title{An ordinary differential equation for entropic optimal transport and its linearly constrained variants}

\author{Joshua Zoen-Git Hiew,\thanks{Department of Mathematical and Statistical Sciences, 632 CAB, University of Alberta, Edmonton, Alberta, Canada, T6G 2G1. email: joshuazo@ualberta.ca}\; 
Luca Nenna\thanks{Universit\'e Paris-Saclay, CNRS, Laboratoire de math\'ematiques d'Orsay, ParMA, Inria Saclay, 91405, Orsay, France. 
email: luca.nenna@universite-paris-saclay.fr}\;  and  Brendan Pass \thanks{Department of Mathematical and Statistical Sciences, 632 CAB, University of Alberta, Edmonton, Alberta, Canada, T6G 2G1.
email: pass@ualberta.ca}}
\maketitle
\begin{abstract} 
We characterize the solution to the entropically regularized optimal transport problem by a well-posed ordinary differential equation (ODE).  Our approach works for discrete marginals and general cost functions, and in addition to two marginal problems, applies to multi-marginal problems and those with additional linear constraints.  Solving the ODE gives a new numerical method to solve the optimal transport problem, which has the advantage of yielding the solution for all intermediate values of the ODE parameter (which is equivalent to the usual regularization parameter). We illustrate this method with several numerical simulations. The formulation of the ODE also allows one to compute derivatives of the optimal cost when the ODE parameter is $0$, corresponding to the fully regularized limit problem in which only the entropy is minimized. 
\end{abstract}

\vskip\baselineskip\noindent
\textit{Keywords.} optimal transport, multi-marginal optimal transport, entropic regularization, ODE, convex analysis.\\
\textit{2020 Mathematics Subject Classification.}  Primary: 49Q22 ; Secondary: 49N15, 94A17, 49K40.
%49Q22 Optimal transportation
%94A17 Measures of information, entropy
%49K40 Sensitivity, stability, well-posedness
%49N15 Duality theory (optimization)

% ArXiv categories.  Optimization and Control (math.OC), Analysis of PDEs (math.AP) "JH: should we also include Probability (math.PR)?
%\tableofcontents

\section{Introduction}

Given probability measures $\mu^1$ and $\mu^2$ on domains $X^1,X^2 \subseteq \mathbb{R}^d$, respectively, and a cost function $c: X^1 \times X^2 \rightarrow \mathbb{R}$, the Monge-Kantorovich optimal transport problem is to minimize
\begin{equation}\label{eqn: two marginal unconstrained problem}
\int_{X^1 \times X^2}c(x^1,x^2) d\gamma(x^1,x^2)
\end{equation}
over the set $\Pi(\mu^1,\mu^2)$ of joint measures on $X^1  \times X^2$ having the $\mu^i$ as marginals.  This simply stated problem has grown exponentially in recent years.  It has an extremely wide variety of applications and has spawned many extensions and variants; see the books of Villani \cite{Villani03, Villani09} and Santambrogio \cite{Santambrogio15} for detailed surveys.

A particularly popular modification of optimal transport is \emph{entropic regularization}, which arises when one penalizes the \emph{entropy}, 
$$
H_{\mu^1 \otimes \mu^2}(\gamma) =\int_{X^1 \times X^2} \frac{d\gamma}{d(\mu^1 \otimes \mu^2)} \log\Big(\frac{d\gamma}{d(\mu^1 \otimes \mu^2)} \Big)d(\mu^1 \otimes \mu^2),
$$
($H$ is taken to be $+\infty$ when $\gamma$ is not absolutely continuous with respect to $\mu^1 \otimes \mu^2$) in the transport functional, and so minimizes  $\int_{X^1 \times X^2}c(x^1,x^2) d\gamma(x^1,x^2) +\eta H_{\mu^1 \otimes \mu^2}(\gamma)$ for some small $\eta >0$.  As the regularization parameter $\eta$ goes to $0$, solutions are well known to tend to solutions of the unregularized problem.  Much of the early motivation for entropic regularization was computational; in contrast to the linear unregularized problem, the optimization problem is now strictly convex, and the unique solution can be efficiently computed via the celebrated Sinkhorn algorithm \cite{cuturi2013, benamouetalentropic}.  The theoretical properties of the curve  $\gamma_\eta$ of solutions, as a function of the regularization parameter $\eta$, including its rate of convergence and Taylor expansion about $\eta =0$, are also topics of current research interest \cite{leonard2013, Pal24, BerntonGhosalNutz22, conforti2021, CarlierPegonTamanini23, NennaPegon23}.

In this paper, we show that, for discrete marginals\footnote{General marginals will be dealt with in a separate, forthcoming paper.}, solutions to the regularized problem can be characterized via a well-posed \emph{ordinary differential equation} (ODE); see Theorem \ref{wellposeODE}.  In fact, we do this for problems that are considerably more general than \eqref{eqn: two marginal unconstrained problem} in two ways: we allow \emph{multiple marginals}, as reviewed in \cite{pass2015, DiMarinoGerolinNenna17} and \emph{additional linear constraints} as formulated in \cite{zaev15}.  This general framework includes many applications in addition to the classical, unconstrained two marginal problem, including multi-marginal problems arising in the evaluation of Wasserstein barycenters \cite{aguehcarlier2011} and in density functional theory \cite{ButtazzoDePascaleGoriGiorgi2012, cotarFrieseckeKluppelberg2013}, for example, as well as constrained problems such as martingale optimal transport, vectorial martingale optimal transport, adapted optimal transport and relaxed weak optimal transport, among others \cite{acciaio2021, backhoff2020, beiglbock2021, beiglbock2016, lassalle2015, hiew2023}.  The precise general problem we address, after regularizing as above, is formulated in \eqref{EOT_linCons_prim} below.

The development of our ODE results in two distinct contributions.  First, it yields a new numerical method to compute solutions to entropically regularized optimal transport and its linearly constrained variants, by solving the corresponding ODE.  This is similar in spirit to the method proposed by two of the present authors in \cite{nenna2022}, but applies to much more general cost functions. Note that the work in \cite{nenna2022} applies only to multi-marginal problems with pairwise cost functions, and that the initial condition for the ODE derived there requires solving $n-1$ two marginal optimal transport problems, whereas the initial condition in our formulation here has a simple closed form solution (at least for problems without extra constraints).  In particular, the approach in \cite{nenna2022} does not apply to the two marginal problem, and additional linear constraints were not addressed there.  In fact, we would like to emphasize that the ODE approach seems very promising for computing other curves of measures.  To illustrate this point, we show in Section \ref{subsect: barycenters and geodesics} below that a slight modification of our general framework here can be used to compute (and provide a new perspective on) Wasserstein geodesics (also known as displacement interpolants \cite{McCann1994}) and  barycenters \cite{aguehcarlier2011}.

 We verify, through several numerical simulations, the feasability of the  method presented here to compute solutions to a variety of problems. A key computational advantage of our approach is that it yields the \emph{entire curve} $\gamma(\ep)$ of solutions, which interpolates between product measure and optimal transport as the ODE variable $\ep$ varies ($\ep$ is essentially equivalent to  $1/\eta$ -- see \eqref{EOT_linCons_ep_prim} below for the precise definition).  This curve is of potential independent interest, due to current research on the behaviour of $\gamma$ as a function of  $\eta$; constructing it using the Sinkhorn method would require a separate Sinkhorn calculation for each $\ep$ and would therefore be much less efficient. 

A second contribution arising from the formulation of the ODE is that it allows one to calculate derivatives of the optimal cost around $\ep =0$ (the fully regularized limit, corresponding to $\eta \rightarrow \infty$).  Taylor expansions of the cost around the fully unregularized, optimal transport limit ($\ep \rightarrow \infty$, or $\eta\rightarrow 0$) have been an active topic of recent research, whereas  expansions around the fully regularized limit have received relatively limited attention (see \cite{conforti2021} for a first order expansion).  The ODE formulation easily allows one to calculate derivatives (albeit with discrete marginals) in closed form.  We illustrate this by obtaining a formula for the second derivative $C''(0)$ of the optimal cost for unconstrained, two marginal, optimal transport.

Our work here has another, modest and somewhat incidental, consequence.  Aside from developing a general ODE method to solve linearly constrained optimal transport problems, we also provide a general framework for applying the Sinkhorn algorithm to this class of problems, unifying and extending previous work on particular cases \cite{demarch2018, eckstein2022}.  The key to this are two general lemmas on the strong convexity of the objective functional after an appropriate reduction in the variables (Lemmas \ref{lem: reduction of A} and \ref{ThmPhiConv} below), enabling the implementation of the general Sinkhorn algorithm (also known as the block coordinate descent method), found, for instance, in \cite{OrtegaRheinboldt00} -- see Proposition \ref{lem: gen SK} below.  This fact also justifies the use of the Sinkhorn algorithm in Section \ref{numericSec} to generate reference solutions (especially for the multi-period martingale problem in Section \ref{subsect: multi-period martingales}, as to the best of our knowledge, a version of the Sinkhorn algorithm has not been developed for this type of problem before).

This paper is structured as follows. In Section 2, we introduce optimal transport with extra linear constraints and the entropic regularization of the corresponding linear programming problem. In this setting, we derive the ODE and prove its well-posedness in Section 3. In Section 4, we derive formulas for higher-order derivatives of the optimal value of unconstrained optimal transport as a function of $\ep$. Finally, in Section 5, we demonstrate several numerical examples based on our ODE methodology and compare them with the traditional Sinkhorn algorithm. 

\section{Optimal Transport with Extra Linear constraints}

\subsection{The basic problem}

Let $\mu^1,...,\mu^n$ be probability measures\footnote{From here on, the superscript index will denote various marginals or spaces, or different linear constraints, while the subscript index will represent the elements within certain marginals or spaces.} on respective bounded domains $X^1,...,X^n$ in $\R^d$. Set $X \coloneqq X^1 \times ... \times X^n$. Let $\mathcal{P}(X)$ be the set of probability measures on $X$ and $\Pi(\mu^1,...,\mu^n)$ be the set of couplings of $\mu^1,...,\mu^n$; that is, measures $\gamma \in \mathcal{P}(X)$ whose marginals are the $\mu^i$. Let $Q \subset C_b(X)$ be a subspace of bounded continuous functions.

If $c: X \to \R$ is the cost function, then the optimal transport problem with extra linear constraints is the following optimization problem:
\be\label{eqn: unregularized primal}
    \inf_{\ga \in \Pi^Q(\mu^1,...,\mu^n)} \int_X c d\ga,
\ee
where $\Pi^Q(\mu^1,...,\mu^n) \subset \Pi(\mu^1,...,\mu^n)$ is the set of couplings such that $\int q d \ga = 0$ for all $q \in Q$.

\begin{remark}
The marginal constraints themselves can be represented within the linear constraint context; letting $Q^i = \{f^i - \int f^i d\mu^i | f^i \in  C_b(X^i) \}$, the subspace $Q \coloneqq \oplus Q^i$ of functions enforces the marginal constraints.  We choose here to distinguish these constraints, denoting them by $\Pi(\mu^1,...,\mu^n)$, to emphasize our focus on optimal transportation.  This is contrast to the notational convention in  \cite{beiglbock2014}, where they include the marginal constraints within the subspace $Q$.
\end{remark}

\begin{remark}
    Note that if $Q = \{0\}$, then the problem reduces to unconstrained, multi-marginal optimal transport.
\end{remark}

It is easy to see that the existence of an optimizer is equivalent to the non-emptiness of the feasible set, provided that the function $c$ satisfies certain mild conditions\cite{zaev15}.

The corresponding dual problem is:
\be\label{eqn: unregularized dual}
    \sup_{(\psi, q) \in \Psi(Q, c)}  \sum_{i=1}^n \int_{X_i} \psi^i d \mu^i, 
\ee
where $\Psi(Q, c) \coloneqq \{ (\psi, q) | \sum_{i=1}^n \psi^i + q \leq c ,\psi = (\psi^1,...,\psi^n), \psi^i \in L^1(\mu^i), q \in Q\}$

As proven in \cite{zaev15}, no duality gap exists between the primal \eqref{eqn: unregularized primal} and dual \eqref{eqn: unregularized dual} problems; that is, the optimal values in the two problems coincide. 

For a given $\eta >0$, we are interested here in the following entropically regularized version of \eqref{eqn: unregularized primal}:

\be\label{EOT_linCons_prim}
\boxed{    \inf_{\ga \in \Pi^Q(\mu^1,...,\mu^n)} \int_X c d\ga + \eta H_{\ot_{i=1}^n \mu^i}(\ga),}
\ee
where $H_{\ot_{i=1}^n \mu^i}(\ga)$ denotes the relative entropy of $\ga$ with respect to the product measure $\ot_{i=1}^n \mu^i$ of the corresponding marginals, defined as:
\be
    H_{\ot_{i=1}^n \mu^i}(\ga) = \int_X \f{d \ga}{d (\ot_{i=1}^n \mu^i)} \log \lt( \f{d \ga}{d (\ot_{i=1}^n \mu^i)} \rt) d(\ot_{i=1}^n \mu^i) \nn,
\ee
when $\ga$ is absolutely continuous with respect to the product measure, and $+\infty$ otherwise.  This regularization has been studied extensively in the classical optimal transport problem and many variants. The dual of \eqref{EOT_linCons_prim} is the unconstrained optimization problem: 

\be\label{EOT_linCons_dual}
    \boxed{\sup_{(\psi, q) \in \Psi(Q, c)}  \sum_{i=1}^n \int_{X^i} \psi^i d \mu^i - \eta \int_X \text{exp}\lt(\f{\sum_{i=1}^n \psi^i + q - c}{\eta}\rt) d (\ot_{i=1}^n \mu^i).}
\ee

As the objective function in \eqref{EOT_linCons_prim} is strictly convex with respect to $\gamma$ for $\eta > 0$, the optimization problem has a unique optimal solution, denoted by $\gamma_\eta$.  Similarly, as we show below (in the discrete case -- see Lemmas \ref{lem: reduction of A} and \ref{ThmPhiConv}), if we restrict to an appropriate subspace of $\Psi(Q,c)$ then the objective function in \eqref{EOT_linCons_dual} is strictly concave with respect to $(\psi, q)$, also resulting in a unique solution $(\psi_\eta, q_\eta)$ when $\eta > 0$.  It is well known in the unconstrained setting $Q=\{0\}$ that cluster points of solutions of $\gamma_\eta$ as $\eta \rightarrow 0$ solve the unregularized problem \eqref{eqn: unregularized primal}; furthermore, under strong additional assumptions, $\gamma_\eta$ converges to the unique measure $\gamma_0$ having minimal entropy \emph{among all minimizers of \eqref{eqn: unregularized primal}} (see Theorem 18 in \cite{Rodrigue2023} for a fairly general result in this direction).  These results carry over easily to the more general problem with nontrival $Q$; we provide a proof in Appendix \ref{AppPrimeConvProof} for the sake  of completeness.  We also show there that cluster points of $(\psi_\eta, q_\eta)$ solve \eqref{eqn: unregularized dual}, at least in the discrete setting that we specialize to below (for convergence in the unconstrained case, see, for example, \cite{NutzWiesel2022}). 
We are interested here in the curve $\eta\mapsto \gamma_\eta$, which interpolates between the optimal transport $\gamma_0$ at $\eta=0$ and the entropy minimizing element of $\Pi^Q(\mu^1,...,\mu^n)$ as $\eta \rightarrow \infty$.  In fact, it will be convenient for us to view this interpolation in a slightly different (but equivalent) way: we fix $\eta$ and introduce a second parameter $\ep \geq 0$ to obtain the problem 

\be\label{EOT_linCons_ep_prim}
    \inf_{\ga \in \Pi^Q(\mu^1,...,\mu^n)} \int_X \ep c d\ga + \eta H_{\ot_{i=1}^n \mu^i}(\ga).
\footnote{One could of course more simply set $\ep =1/\eta$ in the original problem \eqref{EOT_linCons_prim}, which corresponds to taking $\eta =1$ in \eqref{EOT_linCons_ep_prim}.  However, we find it more convenient to retain the parameter $\eta$ here was well.  In particular, below we will characterize solutions by an ODE in $\ep$.  Keeping $\eta$ will allow us to solve \eqref{EOT_linCons_prim} by solving the ODE up to $\ep =1$ for any value of $\eta$, instead of having to consider all values of $\ep$ from $0$ up to $\infty$.}\ee
For a fixed $\eta$, as $\ep \to 0$, this becomes equivalent to finding the couplings that satisfy the extra linear constraints while minimizing the relative entropy:
\be
    \inf_{\ga \in \Pi^Q(\mu^1,...,\mu^n)} H_{\ot_{i=1}^n \mu^i}(\ga). \nn
\ee
Note that in the unconstrained problem $Q = \{0\}$, a trivial solution arises—the product measure $\gamma \in \Pi^Q(\mu^1,...,\mu^n)$ itself.

Conversely, if we let $\ep \to \infty$, the problem reverts to one equivalent to the original unregularized optimal transport problem with extra linear constraints \eqref{eqn: unregularized primal}, while at $\ep=1$ we recover \eqref{EOT_linCons_prim}.  We are interested here in the minimmizer $\gamma(\ep)$ in \eqref{EOT_linCons_ep_prim} as a function of $\ep$, and will show in Section  \ref{sect: ODE} below that for discrete marginals it can be characterized by an ODE in $\ep$.

The dual of \eqref{EOT_linCons_ep_prim} is simply \eqref{EOT_linCons_dual} with the cost function $\ep c$ replacing $c$:
\be\label{EOT_linCons_ep_dual}
    \sup_{(\psi, q) \in \Psi(Q, c)}  \sum_{i=1}^n \int_{X_i} \psi^i d \mu^i - \eta \int_X \text{exp}\lt(\f{\sum_{i=1}^n \psi^i + q - \ep c}{\eta}\rt) d (\ot_{i=1}^n \mu^i).
\ee

\subsection{ The discrete regularized problem}\label{OTexLinCtrLP}
We now specialize to the discrete marginal setting.  Assume that each $\mu^i = \sum_{x_i \in X^i}\mu^{i}_{x^i}\delta_{x^i}$ is supported on a finite subset $X^i\subseteq \mathbb{R}^d$; in this case, $C_b(X)$ is finite dimensional. The constraint subspace $Q$ becomes finite dimensional with basis $\{q^j\}_{j=1}^K$ for some $K \geq 0$ ($K=0$ is unconstrained optimal transport). The cost function $c=(c_x)_{x\in X}$, constraint vectors $q^j=(q_x^j)_{x\in X}$ and product measure $\boldsymbol{\mu} = (\boldsymbol{\mu}_x)_{x \in X} = (\ot_{i=1}^n \mu^i_{x^i})_{x \in X}$ become vectors in $\mathbb{R}^m$ for $m=\Pi_{i=1}^n N^i =|X|$, where $N^i=|X^i|$. Problem \eqref{EOT_linCons_ep_prim} then becomes 

\be\label{discreteOTExtraLinConsEnt}
\begin{array}{lrll}
     \min & \sum_{x \in X} \ep c_x \ga_x &+ \eta \sum_{x \in X} \log\lt(\f{\ga_x}{\boldsymbol{\mu}_x}\rt) \ga_x  &  \\
     \text{subject to} & \sum_{x \in X|_{x_i}} \ga_x &= \mu_{x_i}^i & \forall x_i \in X_i, \quad i = 1,...,n\\
    & \sum_{x \in X} q_x^j \ga_x &= 0 & j = 1,...,K\\
    & \ga_x & \geq 0 & \forall x \in X,
\end{array}
\ee
while its corresponding dual problem becomes an unconstrained optimization problem:
\be\label{discreteOTExtraLinConsEntDual}
     \max_{\psi, p} \sum_i^n \sum_{x_i \in X_i} \psi_{x_i}^i \mu_{x_i}^i - \eta  \sum_{x \in X} \et \lt(\f{\sum_i^n \psi_{x_i}^i + \sum_{j=1}^K q_x^j p^j - \ep c_x}{\eta} \rt) \boldsymbol{\mu}_x.
\ee
Note that in this setting, \eqref{discreteOTExtraLinConsEnt} is known as an exponentially penalized linear program, and duality is well known (see for instance \cite{Cominetti1994}). It will be convenient to express the problem more compactly; we enumerate $X = \{x_\ell\}_{\ell=1}^m$, concatenate $\varphi:=(\psi,p)$ and denote product measure by $\boldsymbol{\mu} :=\ot_{i=1}^n \mu^i$, so that $\boldsymbol{\mu_\ell}= \ot_{i=1}^n \mu^i_{x_l^i}$ for all $x_l=(x^1_l,x^2_l,...,x^n_l) \in X$.  We then note that $ \sum_{i=1}^n \psi_{x^i_\ell}^i + \sum_{j=1}^Kq_{x_\ell}^j p^j=B_\ell\cdot \psi +C_\ell\cdot p=A_\ell \cdot \varphi$ for a vector $A_\ell=[B_\ell, C_\ell]$ and define the matrix $A=[B, C]$ whose rows are the $A_\ell$:
\be\label{eqn: contraint matrix A}
    A = \begin{bmatrix}
           A_1 \\
           \vdots \\
           A_{m}
         \end{bmatrix}.
\ee

It is easy to see that the above primal problem \eqref{discreteOTExtraLinConsEnt} 
can be expressed as:

\be\label{discreteOTExtraLinConsEntShort}
\begin{array}{lrll}
     \min & \multicolumn{2}{c}{\ep c^T \ga + \eta \sum_{\ell=1}^m \log(\f{\ga_\ell}{\boldsymbol{\mu}_\ell}) \ga_\ell} &  \\
     \text{subject to} & A^T \ga &= b\\
    & \ga& \geq 0 &, 
\end{array}
\ee
and the corresponding dual \eqref{discreteOTExtraLinConsEntDual} as:
\be\label{discreteOTExtraLinConsEntDualShort}
     \max_{\varphi}  b^T \varphi - \eta \sum_{\ell=1}^m \et\lt(\f{A_\ell \varphi - \ep c_\ell}{\eta} \rt) \boldsymbol{\mu}_\ell,
\ee
where $b=[\mu, 0]^T=[\mu^1\mu^2...\mu^n,0,...,0]^T$ is the vector formed by concatenating the vectors $\mu^i \in \mathbb{R}^{N^i}$ with a $0$ vector of dimension $K$ (the dimension of $Q$).

The dual program \eqref{discreteOTExtraLinConsEntDualShort} is not strictly concave in general.  A common strategy is to restrict the variable $\varphi$ to a subspace where strict concavity holds.  As we will see in the subsequent section, the following lemma implies that this can always be done without affecting the maximum in \eqref{discreteOTExtraLinConsEntDualShort} by fixing some components of $\varphi$ to be $0$ or, equivalently, removing some columns from $A$ and considering the reduced problem on the remaining corresponding components of $\varphi$. In the argument below, it will be useful to note that the matrix $B$ arises from marginal constraints and $C$ from additional linear constraints.  Therefore, for any $\gamma \in \Pi^Q(\mu^1,...,\mu^n)$ we have $\mu = \gamma^T B$ and $0 = \gamma^T C$.
\begin{lemma}\label{lem: reduction of A}
Assume that $\Pi^Q(\mu^1,\mu^2,...,\mu^n$) is non-empty. Then there is a matrix $\hat A=[\hat B, \hat C]$ and vector $\hat b =[\hat \mu, 0]^T$ of the corresponding dimensions such that the function

 \be\label{eqn: reduced optimization}
        \hat \varphi \mapsto \hat b^T \hat \varphi - \eta \sum_{\ell=1}^m \et\lt(\f{\hat A_\ell \hat \varphi - \ep c_\ell}{\eta} \rt) \boldsymbol{\mu}_\ell
    \ee
    has the same range as the objective function in \eqref{discreteOTExtraLinConsEntDualShort}, and $\hat A$ has full column rank.  Furthermore, $\hat B$ and $\hat C$ are obtained by removing some columns of $B$ and $C$ respectively and $\hat b$ by removing the corresponding entries of $b$.

\end{lemma}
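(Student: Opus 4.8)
The plan is to exploit the fact that, whenever $\Pi^Q(\mu^1,\ldots,\mu^n)$ is non-empty, the objective in \eqref{discreteOTExtraLinConsEntDualShort} is invariant under translating $\varphi$ by any element of $\ker A$, and then to write the space of dual variables as a direct sum of $\ker A$ with a \emph{coordinate} subspace spanned by a maximal linearly independent family of columns of $A$. Restricting the objective to that coordinate subspace is precisely what it means to delete the complementary columns (equivalently, fix the corresponding components of $\varphi$ to $0$), and the direct sum decomposition together with the translation-invariance will force the two objectives to have the same range.

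First I would record the consequence of feasibility. Any $\gamma \in \Pi^Q(\mu^1,\ldots,\mu^n)$ satisfies $\gamma^T B = \mu$ and $\gamma^T C = 0$, i.e.\ $A^T\gamma = b$; hence non-emptiness of the feasible set gives $b \in \mathrm{range}(A^T)$, so $b^T v = \gamma^T A v = 0$ for every $v \in \ker A$. Writing the dual objective as $F(\varphi) := b^T\varphi - \eta\sum_{\ell=1}^m \exp\big((A_\ell\varphi - \ep c_\ell)/\eta\big)\boldsymbol{\mu}_\ell$, for any $v \in \ker A$ we have $A_\ell v = 0$ for all $\ell$, so $F(\varphi + v) = b^T\varphi + b^Tv - \eta\sum_\ell \exp\big((A_\ell\varphi - \ep c_\ell)/\eta\big)\boldsymbol{\mu}_\ell = F(\varphi)$; that is, $F$ is constant on each coset of $\ker A$.

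Next I would choose a subset $S$ of the columns of $A = [B,C]$ forming a basis of the column space of $A$ (a maximal linearly independent subfamily of columns), and let $\hat A$ be the submatrix of $A$ with columns indexed by $S$, so that $\hat A$ has full column rank by construction. Let $N$ be the number of columns of $A$ and let $\R^S \subseteq \R^N$ be the coordinate subspace of vectors supported on $S$. Linear independence of the chosen columns gives $\R^S \cap \ker A = \{0\}$, while $\dim \R^S = |S| = \mathrm{rank}(A) = N - \dim\ker A$, so $\R^N = \R^S \oplus \ker A$. Since the columns of $A$ are the columns of $B$ followed by those of $C$, partitioning $S$ accordingly writes $\hat A = [\hat B, \hat C]$ with $\hat B$ (resp.\ $\hat C$) obtained from $B$ (resp.\ $C$) by deleting columns, and the matching reduction of the right-hand side is $\hat b = [\hat\mu, 0]^T$, deleting the corresponding entries of $b = [\mu,0]^T$.

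Finally I would verify the "same range" claim. For $\hat\varphi \in \R^S$, extending by zeros to $\varphi \in \R^N$ yields $A_\ell\varphi = \hat A_\ell\hat\varphi$ and $b^T\varphi = \hat b^T\hat\varphi$, so the value of \eqref{eqn: reduced optimization} at $\hat\varphi$ equals $F(\varphi)$, giving one inclusion of ranges; conversely any $\varphi \in \R^N$ decomposes as $\varphi = \hat\varphi + v$ with $\hat\varphi \in \R^S$ and $v \in \ker A$, and by the invariance $F(\varphi) = F(\hat\varphi)$, which is the value of \eqref{eqn: reduced optimization} at $\hat\varphi$ restricted to its $S$-components, giving the reverse inclusion. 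The step I expect to require the most care is the first one: without the feasibility hypothesis the objective need not be constant (indeed need not be bounded) along $\ker A$, and it is exactly the relation $b \perp \ker A$ that makes the translation-invariance hold and, simultaneously, guarantees that discarding the corresponding columns changes neither the range nor the supremum.
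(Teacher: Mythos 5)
Your proof is correct, and it reaches the conclusion by a genuinely more streamlined route than the paper. The paper reduces the claim to the statement that the augmented matrices $\begin{bmatrix} b^T \\ A \end{bmatrix}$ and $\begin{bmatrix} \hat b^T \\ \hat A \end{bmatrix}$ have the same column space, and then runs an explicit greedy column-removal in two stages: the $B$-columns are handled using only the identity $\boldsymbol{\mu}^T B = \mu$ (the product measure has the prescribed marginals, no feasibility of $\Pi^Q$ needed), while the $C$-columns are handled one at a time using a feasible $\gamma \in \Pi^Q$ to show that any kept-column representation of a redundant $q^j$ is orthogonal to $[\hat\mu, 0]$. You instead compress both steps into the single observation $b = A^T\gamma \in \mathrm{range}(A^T)$, hence $b \perp \ker A$, which makes the dual objective constant along cosets of $\ker A$; choosing any column basis $S$ of $A$ and using the direct-sum decomposition $\mathbb{R}^N = \mathbb{R}^S \oplus \ker A$ (valid exactly because the $S$-columns are independent and $|S| = \mathrm{rank}(A)$) then gives both range inclusions at once, together with full column rank of $\hat A$ and the required block structure $\hat A = [\hat B, \hat C]$, $\hat b = [\hat\mu, 0]^T$. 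The underlying fact being exploited is the same in both arguments (feasibility forces every linear relation among the columns of $A$ to be matched by the same relation among the entries of $b$), but your kernel/direct-sum formulation is shorter and treats the marginal and extra-constraint blocks uniformly; what the paper's longer construction buys is a bit of extra structure, namely that one may always keep a $\hat B$ with the same column space as $B$ (reducing $B$ first, independently of $Q$), which can be convenient in implementations, though it is not required by the statement of the lemma. You also correctly isolate where feasibility is indispensable: without $b \in \mathrm{range}(A^T)$ the objective is not invariant (indeed can be unbounded) along $\ker A$, and the reduction would change the range.
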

\begin{proof}
It is easy to see that $\eqref{discreteOTExtraLinConsEntDualShort}$ and \eqref{eqn: reduced optimization} have the same range provided the matrices
    \be
        P = 
        \begin{bmatrix}
            b^T\\
            A\\
        \end{bmatrix}
        =
        \begin{bmatrix}
            \mathbf{\mu} & 0\\
            B & C\\
        \end{bmatrix}
        \text{\qquad and \qquad}
        \hat P = 
        \begin{bmatrix}
            \hat b^T\\
            \hat A\\
        \end{bmatrix} \nn
    \ee
have the same column space.

We first claim that if we drop columns of $B$ until the resulting matrix $\hat B$ has full column rank and the same range as $B$, and drop the corresponding entries of $ \mu$ to obtain $\hat \mu$, then $\begin{bmatrix}
           \hat{\mu} \\
            \hat B \\
        \end{bmatrix}$ has the same range as  $\begin{bmatrix}
            \mu \\
             B \\
        \end{bmatrix}$.  To see this, it suffices to show that if $\psi$ is in the null space of $B$, it is also orthogonal to $\mu$.  Noting that as $B$ captures the marginal constraints, we have $B_\ell \psi = \sum_{i=1}^n \psi_{x_\ell^i}$, where $x_\ell =(x_\ell^1,x_\ell^2,...,x_\ell^n)$, so if $B\psi=0$, $\sum_{i=1}^n \psi_{x_\ell^i}=0$ for each $l$.     
        
        Multiplying by 
        $\boldsymbol{\mu_\ell}$ and summing over $l$, we have $$
        0=\sum_{l=1}^m\boldsymbol{\mu_\ell}\sum_{i=1}^n \psi_{x_\ell^i}=\sum_{i=1}^n\sum_{l: x_\ell^i=x^i}\boldsymbol{\mu_\ell} \psi_{x^i}=\sum_{i=1}^n\mu^i_{x^i} \psi_{x^i}=\mu\psi,
        $$
        establishing the claim.

We now claim that we can drop a subset of the columns of $C$ to obtain a matrix $\hat C$ such that
$\begin{bmatrix}
           \hat{\mu} & 0\\
            \hat B & C \\
        \end{bmatrix}$ and
$\begin{bmatrix}
           \hat{\mu} & 0\\
            \hat B  & \hat C\\
            \end{bmatrix}$ have the same range while $[\hat B, \hat C]$ has full column rank.
Take $q^1$ as the first column of $C$. If $q^1$ does not belongs to the column space of $\hat B$, we set $\hat C^1 = q^1$. Otherwise, we claim that $\begin{bmatrix}
           \hat{\mu} \\
            \hat B \\
        \end{bmatrix}$ and
$\begin{bmatrix}
           \hat{\mu} & 0\\
            \hat B  & q^1\\
        \end{bmatrix}$ have the same range. If $\hat B \varphi = q^1$ and $\ga \in \Pi^Q(\mu^1,...,\mu^n)$, then $\hat{B}^T \ga = \hat{\mu}^T$ and ${q^1}^T \ga = 0$ by the marginal constraints and extra linear constraints, respectively. Hence we have 
  
        \be
            \hat{\mu} \varphi = \ga^T \hat B \varphi = \ga^T q^1 = 0.\nn
        \ee
        Then for any $\begin{bmatrix}
           u \\
            v \\
        \end{bmatrix}$ of the appropriate dimension, we have
        \be
        \begin{bmatrix}
           \hat{\mu} & 0  \\
            \hat B & q^1\\
        \end{bmatrix}
        \begin{bmatrix}
           u \\
            v \\
        \end{bmatrix} = 
        \begin{bmatrix}
           \hat{\mu} u  \\
            \hat B u + q^1 v\\  
        \end{bmatrix} = 
        \begin{bmatrix}
           \hat{\mu} (u + \varphi v) \\
            \hat B (u + \varphi v)\\  
        \end{bmatrix} =
        \begin{bmatrix}
           \hat{\mu}\\
            \hat B\\  
        \end{bmatrix}
        \begin{bmatrix}
           u + \varphi v \\ 
        \end{bmatrix},
        \ee
        where we have used the fact that $\hat{\mu} \varphi = 0$ and $\hat B \varphi = q^1$ in the second last equation. Therefore, we can conclude that it is safe to drop this column without changing the range. Continuing in this way, assume $\hat C^j$ is a matrix formed by the columns we keep after considering the first $j$ columns of $C$. If the $j+1$th column $q^{j+1}$ of $C$ does not belong to the column space of $[\hat B, \hat C^j]$, then we keep it and form a new matrix $\hat C^{j+1}$. Otherwise, we can write $\hat B \varphi + \hat C^j \psi = q^{j+1}$ for some $\varphi$ and $\psi$. Then, given that  $\hat{B}^T \ga = \hat{\mu}^T$ and $\hat{C}^{jT} \ga = 0$
        \be
            \hat{\mu} \varphi + 0 \psi = \ga^T \hat B \varphi + \ga^T \hat C^j \psi = \ga^T q^{j+1} = 0.
        \ee
        Then with the similar argument as above, $\begin{bmatrix}
           \hat{\mu} & 0\\
            \hat B & \hat C^j \\
        \end{bmatrix}$ and
$\begin{bmatrix}
           \hat{\mu} & 0 & 0\\
            \hat B  & \hat C^j & q^{j+1}\\
            \end{bmatrix}$ have the same range. Then we can set $\hat C^{j+1} = \hat C^j$ and in any case, $[\hat B, \hat C^{j+1}]$ has full column rank. We continuous this process until we check all columns of $C$ to obtain the final matrix $\hat C = \hat C^K$. By construction, $[\hat B, \hat C]$ has full column rank and 
            \be
    P = 
    \begin{bmatrix}
        \mu & 0\\
        B & C\\
    \end{bmatrix}
    \text{\qquad and \qquad}
    \hat P = 
    \begin{bmatrix}
        \hat \mu & 0\\
        \hat B & \hat C\\
    \end{bmatrix} \nn
\ee
have the same range. Now, we set 
    $\hat b^T = [\hat \mu,  0]$ and $\hat A = [\hat B, \hat C]$, and the result follows.
\end{proof}

\section{ODE for entropic Optimal Transport}\label{sect: ODE}

\subsection{Formulation of the ODE}\label{FormODESec}

We now formulate a system of ODEs characterizing the optimal dual Kantorovich potential for problem \eqref{discreteOTExtraLinConsEntDualShort}. For a fixed $\eta > 0$, we define the function:
\be\label{OTExLinConsFunc1}
    \Phi(\varphi, \ep) = -b^T \varphi + \eta \sum_{\ell=1}^m \et\lt(\f{A_\ell \varphi - \ep c_\ell}{\eta} \rt) \boldsymbol{\mu}_\ell.
\ee
By Lemma \ref{lem: reduction of A}, we can from now on always assume that $A$ has full column rank. We then consider the optimization problem:
\be\label{OTExLinConsProb}
    \min_\varphi \Phi(\varphi, \ep).
\ee
Then, \eqref{discreteOTExtraLinConsEntDualShort} is equivalent to \eqref{OTExLinConsProb} with the sign inverted. The first order optimality condition necessitates that at the minimizing $\varphi$:
\be\label{OTExLinConsFuncFirstOrd}
    \grad_\varphi \Phi(\varphi, \ep) = 0.
\ee

It is evident that $\Phi(\varphi, \ep)$ is smooth in $\varphi$ and $\ep$.   The lemma below implies its strict convexity in $\varphi$, and therefore, for each $\ep$, uniqueness of the solution $\varphi(\ep)$ to \eqref{OTExLinConsFuncFirstOrd}.  Invertibility of the Hessian (also implied by the following lemma) then ensures $\varphi(\ep)$ is smooth, by the implicit function theorem.
\begin{lemma}\label{ThmPhiConv}
    The function $\Phi(\varphi, \ep) = -b^T \varphi + \eta \sum_\ell^m \et (\foe (A_\ell \varphi - \ep c_\ell)) \boldsymbol{\mu}_\ell$ is uniformly convex in $\varphi$ over any compact domain $V \times T \subset \R^e \times \R$ where $e$ is the dimension of $\varphi$.
\end{lemma}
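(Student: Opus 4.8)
The plan is to compute the Hessian of $\Phi$ in $\varphi$ and show it is uniformly positive definite on $V\times T$. Writing $f_\ell(\varphi,\ep) := \et\!\left(\tfrac{1}{\eta}(A_\ell\varphi - \ep c_\ell)\right)\boldsymbol{\mu}_\ell$, a direct differentiation gives
\be
\grad_\varphi^2 \Phi(\varphi,\ep) = \foe \sum_{\ell=1}^m f_\ell(\varphi,\ep)\, A_\ell^T A_\ell,
\nn
\ee
since the linear term $-b^T\varphi$ contributes nothing to the second derivative and each exponential summand differentiates to $\foe f_\ell A_\ell^T A_\ell$. Thus for any $v\in\R^e$,
\be
v^T \grad_\varphi^2 \Phi(\varphi,\ep)\, v = \foe \sum_{\ell=1}^m f_\ell(\varphi,\ep)\,(A_\ell\cdot v)^2 \;\geq\; 0,
\nn
\ee
so $\Phi$ is convex in $\varphi$; it remains to produce a \emph{uniform} lower bound bounded away from $0$ over the compact set.

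First I would observe that on the compact domain $V\times T$ the exponents $\foe(A_\ell\varphi - \ep c_\ell)$ are bounded below (they are continuous in $(\varphi,\ep)$ on a compact set), and since $\boldsymbol{\mu}_\ell>0$ for every $\ell$ (each marginal weight is strictly positive on its support), there is a constant $\delta>0$ with $f_\ell(\varphi,\ep)\geq \delta$ for all $\ell$ and all $(\varphi,\ep)\in V\times T$. Hence
\be
v^T \grad_\varphi^2 \Phi(\varphi,\ep)\, v \;\geq\; \f{\delta}{\eta} \sum_{\ell=1}^m (A_\ell\cdot v)^2 \;=\; \f{\delta}{\eta}\, \|Av\|^2.
\nn
\ee
Because $A$ has full column rank (by Lemma \ref{lem: reduction of A}, which we are entitled to assume), the map $v\mapsto\|Av\|^2$ is a positive definite quadratic form, so $\|Av\|^2 \geq \lambda_{\min}\|v\|^2$ where $\lambda_{\min}>0$ is the smallest eigenvalue of $A^TA$. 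Therefore $\grad_\varphi^2\Phi(\varphi,\ep) \succeq \tfrac{\delta\lambda_{\min}}{\eta}\,\Id$ uniformly on $V\times T$, which is exactly uniform convexity in $\varphi$.

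The only real subtlety — and the step I would be most careful about — is the passage from pointwise positivity to a uniform bound: one needs both that the $f_\ell$ stay bounded away from $0$ (which uses compactness of $V\times T$ and positivity of $\boldsymbol{\mu}$, i.e. that we are genuinely in the discrete setting with finitely many strictly positive marginal weights) and that full column rank of $A$ is not merely generic but gives a strictly positive $\lambda_{\min}$. Both are immediate here, so no essential obstacle remains; everything else is the routine differentiation recorded above. I would also remark that the same computation shows $\grad_\varphi^2\Phi$ is invertible, which is what is needed to invoke the implicit function theorem for smoothness of $\varphi(\ep)$ in the surrounding discussion.
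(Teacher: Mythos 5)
Your proposal is correct and follows essentially the same route as the paper: compute the Hessian $\frac{1}{\eta}\sum_\ell A_\ell^T A_\ell\, \et(\foe(A_\ell\varphi-\ep c_\ell))\boldsymbol{\mu}_\ell = \foe A^T D A$, use strict positivity of the exponential weights and of $\boldsymbol{\mu}_\ell$ together with the full column rank of $A$ (justified by Lemma \ref{lem: reduction of A}) for positive definiteness, and invoke continuity on the compact set $V\times T$ for a uniform lower bound. Your write-up just makes explicit the quantitative bound $\frac{\delta\lambda_{\min}}{\eta}$ that the paper leaves implicit.
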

\begin{proof}
    A straightforward calculation shows that the gradient and Hessian of $\Phi(\varphi, \ep)$ in $\varphi$ are given by:
    \begin{align*}
        \grad_\varphi \Phi(\varphi, \ep) &= -b^T + \sum_{\ell=1}^m A_\ell^T  \et (\foe (A_\ell \varphi - \ep c_\ell)) \boldsymbol{\mu}_\ell\\
        D_{\varphi \varphi}^2 \Phi(\varphi, \ep) &= \foe \sum_{\ell=1}^m A_\ell^T \et (\foe (A_\ell \varphi - \ep c_\ell)) A_\ell \boldsymbol{\mu}_\ell. 
    \end{align*}
    The Hessian can be rewritten as:
    \be\label{discreteOTExtraLinConsEntHess}
        D_{\varphi \varphi}^2 \Phi(\varphi, \ep) = \foe A^T D A, 
    \ee
    where $D$ is a diagonal matrix with the $(\ell,\ell)$ th element being $\et (\foe (A_\ell \varphi - \ep c_\ell)) \boldsymbol{\mu}_\ell$. This implies that $D$ and therefore $A^T D A$ is positive definite, and as the entries are continuous, uniformly bounded below $A^T D A \geq d>0$ on the compact domain $V \times T$.
\end{proof}

Differentiating \eqref{OTExLinConsFuncFirstOrd} with respect to $\ep$ we obtain:
\be
    \f{\p}{\p \ep} \grad_\varphi \Phi(\varphi(\ep), \ep) + D_{\varphi \varphi}^2  \Phi(\varphi(\ep), \ep) \f{d \varphi}{d \ep}(\ep) = 0. \nn
\ee
Using the lemma, we obtain the Cauchy problem that governs the behavior of the optimal potential under the change of $\ep$:
\be\label{discreteOTExtraLinConsEntODE}
    \begin{cases}
        \f{d \varphi}{d \ep} (\ep) &= - \lt[ D_{\varphi \varphi}^2 \Phi(\varphi(\ep), \ep) \rt]^{-1} \f{\p}{\p \ep} \grad_\varphi \Phi(\varphi(\ep), \ep)\\
        \varphi(0) &= \phi_0.
    \end{cases}
\ee
Here, $\phi_0$ represents the initial condition of the ODE corresponding to the optimal $\varphi$ solving \eqref{OTExLinConsProb} when $\ep = 0$.  This corresponds to finding the measure $\gamma$ which minimizes the relative entropy among the admissable class $\Pi^Q(\mu^1,\mu^2,...,\mu^n)$, and is often easier to find than the solution to \eqref{OTExLinConsProb} for $\ep>0$. In particular, note that when  $Q = \{0\}$, representing the unconstrained (multi-marginal) optimal transport problem, the minimizer $\gamma_0$ is product measure, making it straightforward to verify that $\phi_0 = 0$.  
It is straightforward to calculate:
\be
    \f{\p}{\p \ep} \grad_\varphi \Phi(\varphi, \ep) = -\foe \sum_{\ell=1}^m c_\ell A_\ell^T  \et (\foe (A_\ell \varphi - \ep c_\ell)) \boldsymbol{\mu}_\ell.  \nn
\ee
Combined with equation \eqref{discreteOTExtraLinConsEntHess}
for the Hessian, the ODE \eqref{discreteOTExtraLinConsEntODE} can be written explicitly.

\subsection{Well-posedness of the ODE}
We now establish that the ODE \eqref{discreteOTExtraLinConsEntODE} has a unique solution.  This is a straightforward application of the Cauchy-Lipschitz Theorem, after the verification of a few preliminary properties.

As $\varphi(\ep)$ is clearly continuous,  there exist an $M > 0$ such that $||\varphi(\ep)|| \leq M, \ep \in [0, 1]$.  Therefore, we aim to prove the well-posedness of \eqref{OTExLinConsProb} over $V \times [0,1]$ where:
\be
    V \coloneqq \lt\{ \varphi \in \R^e| 
 \, ||\varphi|| \leq M \rt\}. \nn
\ee

\begin{theorem}\label{wellposeODE}
    Let $\varphi(\ep)$ be the solution of \eqref{OTExLinConsProb} for $\ep \in [0,1]$. Then the trajectory $\ep \mapsto \varphi(\ep)$ is smooth and is characterized as the unique solution to the initial value problem:
    \be
        \begin{cases}
            \f{d \varphi}{d \ep} (\ep) &= - \lt[ D_{\varphi \varphi}^2 \Phi(\varphi(\ep), \ep) \rt]^{-1} \f{\p}{\p \ep} \grad_\varphi \Phi(\varphi(\ep), \ep)\\
            \varphi(0) &= \phi_0
            \end{cases}.
    \ee
    Here, $\phi_0$ is the solution to the problem:
    \be
        \min_{\varphi} \Phi(\varphi, 0),
    \ee
    corresponding to the dual solution to the entropic minimization with respect to the product measure satisfying the extra linear constraints.
\end{theorem}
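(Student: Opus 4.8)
The plan is to assemble the pieces already established in the excerpt into a standard Cauchy--Lipschitz argument. By Lemma~\ref{lem: reduction of A} we may assume $A$ has full column rank, and by Lemma~\ref{ThmPhiConv} the function $\Phi(\varphi,\ep)$ is uniformly convex in $\varphi$ on $V\times[0,1]$, with Hessian $D^2_{\varphi\varphi}\Phi = \tfrac1\eta A^T D A \geq d\,\mathrm{Id}$ for some $d>0$. Hence for each $\ep\in[0,1]$ the first-order condition \eqref{OTExLinConsFuncFirstOrd} has a unique solution $\varphi(\ep)$, and this defines the trajectory whose smoothness and ODE characterization we must prove.

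First I would record that $\varphi(\cdot)$ is continuous (this is noted in the excerpt, via an elementary coercivity/stability argument for the strictly convex minimization), so that $\|\varphi(\ep)\|\le M$ on $[0,1]$ and the whole trajectory stays in the compact set $V$. Next, I would invoke the implicit function theorem applied to $G(\varphi,\ep) := \grad_\varphi\Phi(\varphi,\ep)$: $G$ is $C^\infty$ in both variables (it is a finite sum of exponentials composed with affine maps), and $D_\varphi G = D^2_{\varphi\varphi}\Phi$ is invertible on $V\times[0,1]$ by Lemma~\ref{ThmPhiConv}. Therefore $\ep\mapsto\varphi(\ep)$ is $C^\infty$, and differentiating $G(\varphi(\ep),\ep)\equiv 0$ in $\ep$ yields exactly the displayed ODE
\be
    \f{d\varphi}{d\ep}(\ep) = -\lt[D^2_{\varphi\varphi}\Phi(\varphi(\ep),\ep)\rt]^{-1}\f{\p}{\p\ep}\grad_\varphi\Phi(\varphi(\ep),\ep). \nn
\ee
The initial value $\varphi(0)=\phi_0$ is simply the definition of $\varphi(\ep)$ at $\ep=0$, i.e. the minimizer of $\Phi(\cdot,0)$, which by weak duality for the exponentially penalized linear program corresponds to the dual solution of the entropy-minimization problem over $\Pi^Q(\mu^1,\dots,\mu^n)$.

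For the uniqueness half, I would verify that the right-hand side $F(\varphi,\ep) := -[D^2_{\varphi\varphi}\Phi(\varphi,\ep)]^{-1}\,\partial_\ep\grad_\varphi\Phi(\varphi,\ep)$ is Lipschitz in $\varphi$ on $V\times[0,1]$. Both factors are smooth on this compact set: $\partial_\ep\grad_\varphi\Phi = -\tfrac1\eta\sum_\ell c_\ell A_\ell^T \et(\foe(A_\ell\varphi-\ep c_\ell))\boldsymbol\mu_\ell$ is $C^\infty$, and $\varphi\mapsto[D^2_{\varphi\varphi}\Phi(\varphi,\ep)]^{-1}$ is $C^\infty$ because the Hessian is continuous, invertible, and (again by Lemma~\ref{ThmPhiConv}) uniformly bounded below by $d>0$, so its inverse is uniformly bounded above; a product of such maps is Lipschitz on a compact set. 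The Cauchy--Lipschitz (Picard--Lindel\"of) theorem then gives a unique maximal solution to the initial value problem, and since $\varphi(\cdot)$ constructed above solves it and remains in $V$ on all of $[0,1]$, it is \emph{the} solution.

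The only genuine subtlety — the ``hard part,'' though it is mild here — is ensuring the solution does not leave the region where the estimates hold: one must know a priori that $\varphi(\ep)$ stays bounded on $[0,1]$ so that the uniform lower bound on the Hessian, and hence the uniform upper bound on its inverse, actually apply along the trajectory. This is handled by the continuity/boundedness of $\varphi(\cdot)$ noted at the start (which in turn rests on the strict convexity and coercivity of $\Phi(\cdot,\ep)$), after which choosing $M$ large enough makes $V\times[0,1]$ an invariant compact set and the Cauchy--Lipschitz machinery closes the argument. Everything else is routine bookkeeping with the explicit gradient and Hessian formulas from Section~\ref{FormODESec}.
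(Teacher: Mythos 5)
Your proposal is correct and follows essentially the same route as the paper: smoothness of $\varphi(\ep)$ via the implicit function theorem (using the uniform convexity from Lemma \ref{ThmPhiConv} and the full-rank reduction of Lemma \ref{lem: reduction of A}), derivation of the ODE by differentiating the first-order condition, and uniqueness via Cauchy--Lipschitz with the Lipschitz bound coming from smoothness plus the uniform lower bound on the Hessian (hence upper bound on its inverse) on the compact set $V\times[0,1]$. Your explicit attention to the a priori boundedness of the trajectory is the same point the paper disposes of by noting continuity of $\varphi(\ep)$ before stating the theorem.
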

    
\begin{proof}
    The smoothness of $\varphi(\ep)$ and the fact that it satisfies the ODE is demonstrated in section \ref{FormODESec}. 

    For existence and uniqueness, to apply the Cauchy-Lipschitz Theorem, we only need that the function 
    \be
        (\varphi, \ep) \mapsto - \lt[ D_{\varphi \varphi}^2 \Phi(\varphi(\ep), \ep) \rt]^{-1} \f{\p}{\p \ep} \grad_\varphi \Phi(\varphi(\ep), \ep)\nn
    \ee
    is Lipschitz with respect to $\varphi$ and continuous with respect to $\ep$ on the domain $V \times [0,1]$.  This follows immediately from the smoothness and uniform convexity of $\Phi$ (noting that the latter property ensures an upper bound on  $[ D_{\varphi \varphi}^2 \Phi(\varphi(\ep), \ep) ]^{-1}$).
\end{proof}

\begin{remark}
   Due to the algebraic properties of the entropy, we can recover the curve of  the solution to the primal $\ga(\ep)$ by using the optimal dual variables, that is
    \be\label{primalRecover}
        \ga(\ep) = \exp \lt(\f{\sum_i^n \psi^i(\ep) + \sum_{j=1}^Kq^j p^j (\ep)- \ep c}{\eta} \rt) d(\ot_{i=1}^n \mu^i).\nn
    \ee
\end{remark}

The convergence of the Sinkhorn algorithm for unconstrained two or multi-marginal problems is well known \cite{carlier22}. On the other hand, while the Sinkhorn algorithm has been applied to various linearly constrained variants \cite{demarch2018}, \cite{eckstein2022}, there does not seem to be a general convergence result in this context.  The generalization of the Sinkhorn algorithm to this setting is a block coordinate descent method, and our Lemma's \ref{lem: reduction of A} and \ref{ThmPhiConv} imply that the entropic dual problem \eqref{discreteOTExtraLinConsEntDualShort} can always be formulated in a way that this scheme converges, as the following result confirms.

\begin{proposition}\label{lem: gen SK}
     For any initial $\phi_0$, the block coordinate descent iteration (Sinkhorn algorithm) will converge to the unique optimal solution of \eqref{OTExLinConsProb} with fixed $\ep \geq 0$.
\end{proposition}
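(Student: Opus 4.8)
The plan is to recast \eqref{OTExLinConsProb} into the precise form covered by the classical convergence theory for cyclic block coordinate (Gauss--Seidel) minimization of a smooth convex function, as in \cite{OrtegaRheinboldt00}. First I would apply Lemma \ref{lem: reduction of A} to reduce to the case where $A$ has full column rank. Then the Hessian computation inside the proof of Lemma \ref{ThmPhiConv} --- namely \eqref{discreteOTExtraLinConsEntHess} with $D$ a strictly positive diagonal matrix --- exhibits $D_{\varphi\varphi}^2\Phi(\varphi,\ep)$ as positive definite at \emph{every} $\varphi$, so $\Phi(\cdot,\ep)$ is $C^\infty$ and strictly convex on $\R^{e}$; in particular it has at most one minimizer $\varphi(\ep)$, and that one exists was recorded ahead of Theorem \ref{wellposeODE} (it is in any case standard for exponentially penalized linear programs, cf. \cite{Cominetti1994}).

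The central input I would then establish is that $\Phi(\cdot,\ep)$ has bounded sublevel sets. The quickest route is: if $\Phi(\cdot,\ep)$ were not coercive, convexity would force it to be bounded above along some ray, and then strict convexity together with the minimality of $\varphi(\ep)$ would force $\Phi(\cdot,\ep)$ to be constant along the parallel ray through $\varphi(\ep)$ --- impossible. (Alternatively one checks coercivity directly via recession functions: the recession of $\Phi(\cdot,\ep)$ in a direction $v$ is $+\infty$ unless $Av\le 0$ componentwise, in which case, using $b=A^{T}\gamma$ for any feasible $\gamma\in\Pi^{Q}(\mu^1,\dots,\mu^n)$, it equals $-b^{T}v=-\gamma^{T}(Av)\ge 0$; that this is strictly positive for every $v\neq 0$ is exactly where the geometry of the feasible set enters, and it is transparent when $Q=\{0\}$ since then $\boldsymbol{\mu}$ is a fully supported feasible coupling.) In either case $L_{0}:=\{\varphi:\Phi(\varphi,\ep)\le\Phi(\phi_{0},\ep)\}$ is compact, and since block coordinate descent is monotone, $\Phi(\varphi^{k+1},\ep)\le\Phi(\varphi^{k},\ep)$, so every iterate lies in $L_{0}$.

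Next I would verify the iteration is well defined and then invoke the convergence theorem. Freezing all coordinates outside one block, $\Phi(\cdot,\ep)$ restricts to a smooth function whose Hessian is a principal submatrix of a positive definite matrix (hence positive definite) and which is coercive (coercivity on $\R^{e}$ passes to affine slices), so each block update is the unique minimizer of a strictly convex coercive function. Thus all hypotheses of the cyclic block-coordinate descent theorem hold: $\Phi(\cdot,\ep)$ is continuously differentiable and convex, the minimum over each block is uniquely attained, and the iterates stay in the compact set $L_{0}$. The theorem then gives that every limit point of $(\varphi^{k})$ is a stationary point of $\Phi(\cdot,\ep)$, hence a global minimizer by convexity, hence equal to $\varphi(\ep)$; since $(\varphi^{k})$ is bounded with $\varphi(\ep)$ as its unique limit point, $\varphi^{k}\to\varphi(\ep)$.

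The main obstacle --- and essentially the only nontrivial point --- is the compactness step, i.e. ruling out that the iterates escape to infinity. For unconstrained (multi-)marginal transport this is immediate, since the recession computation above is strictly positive in every direction; with nontrivial linear constraints one leans instead on the existence of the dual optimizer $\varphi(\ep)$ combined with strict convexity. Everything else --- the reduction to full column rank, strict convexity, uniqueness of the block updates, and the appeal to the Gauss--Seidel theorem --- is routine once Lemmas \ref{lem: reduction of A} and \ref{ThmPhiConv} are in hand.
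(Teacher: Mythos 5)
Your proposal is correct and follows essentially the same route as the paper: reduce to full column rank via Lemma \ref{lem: reduction of A}, use the convexity supplied by Lemma \ref{ThmPhiConv}, and invoke the block coordinate descent (Gauss--Seidel) convergence theorem, Theorem 14.6.7 of \cite{OrtegaRheinboldt00}. The only difference is that you verify the theorem's hypotheses explicitly (compact sublevel sets via strict convexity plus existence of the minimizer, and unique well-defined block updates), details the paper subsumes in its brief appeal to strong convexity and the cited theorem.
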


\begin{proof}
    Recall that we are assuming that $A$ has full column rank (and that this assumption is justified by Lemma \ref{lem: reduction of A}).  By Lemma \ref{ThmPhiConv}, $\Phi(\varphi, \ep)$ is strongly convex in $\varphi$ for fixed $\ep \geq 0$. Together with the differentiability of $\Phi$ in $\varphi$, by Theorem 14.6.7 of \cite{OrtegaRheinboldt00}, the result follows.
\end{proof}
In particular, note that this result will be crucial in Section \ref{subsect: multi-period martingales} below, as, to the best of our knowledge, a version of the Sinkhorn algorithm has not been implemented on multi-period martingale optimal transport problems before.

\section{Derivatives of the optimal cost}
In this section, we illustrate how the ODE \eqref{discreteOTExtraLinConsEntODE} can be used to compute derivatives of the optimal cost at $\ep =0$.  For simplicity, we restrict our attention to the unconstrained two marginal optimal transport problem (that is, $n=2$, $Q=\{0\}$), although the techniques apply more generally.  In this setting, adopting slightly different notation than before, the primal problem is

\be \label{eqn: primal regularized 2 marginal OT}
    P(\ep) \coloneqq \inf_{\ga \in \Pi(\mu, \nu)} \int_{X \times Y} \ep c(x,y) d\ga(x,y) + \eta H_{\mu \ot \nu}(\ga),
\ee
while the dual is:
\be
    D(\ep) \coloneqq \sup_{u, v} \int_X u d\mu +  \int_Y v d\nu - \eta \int_{X \ot Y} e^{\f{u + v - \ep c}{\eta}} d\mu d\nu.
\ee
For the discrete marginals $\{\mu_r\}, \{\nu_s\}$, if we denote $c_{rs} = c(x_r, y_s)$, then the dual can be written as an equivalent problem:
\be
    C(\ep) \coloneqq \inf_{u, v} \Phi(u, v, \ep) = -D(\ep)=-P(\ep),
\ee
where
\be\label{OTdualcost1}
    \Phi(u, v, \ep) = -\sum_r u_r \mu_r - \sum_s v_s \nu_s + \eta \sum_{r,s} \et \lt[ \foe (u_r + v_s - \ep c_{rs}) \rt] \mu_r \nu_s.
\ee
 We will see that one can use the ODE approach to evaluate all derivatives of $C(\ep)$ at $\ep =0$ in closed form (we provide the explicit calculations for derivatives up to order $2$, but it will be clear that higher order derivatives can be calculated similarly).  A potential application is a Taylor expansion for $C(\ep)$ around $\ep =0$ to arbitrary order.

In this setting, it is actually simpler to eliminate $v$ and express the problem in terms of $u$ only.  Letting $(u(\ep),v(\ep))$ be the minimizer in \eqref{OTdualcost1}, or equivalently, solutions to our ODE, we have, at $(u,v) =(u(\ep),v(\ep))$, by the first order optimality condition, $\forall j$,
\be
    \f{\p}{\p v_j} \Phi = -\nu_j + \sum_r \et \lt[ \foe (u_r + v_j - \ep c_{rj})\rt] \mu_r \nu_j = 0. \nn
\ee
Hence we have
\be\label{OTVexpression}
    v_j = - \eta \log\lt[ \sum_r \et \lt[ \foe(u_r - \ep c_{rj}) \rt] \mu_r \rt].
\ee
After we substitute \eqref{OTVexpression} in \eqref{OTdualcost1}, we get:
\be
    \Phi(u, \ep) \coloneqq -\sum_r u_r \mu_r + \eta \sum_s \log\lt[ \sum_r \et \lt[ \foe(u_r - \ep c_{rs}) \rt] \mu_r \rt]\nu_s - \eta,
\ee
where we have abused the notation for $\Phi$; note that we can now write $C(\ep)=\inf_u \Phi(u,\ep)$.

 For simplicity, we denote $e_{ij} = \et[ \foe (u_i - \ep c_{ij})]\mu_i$. Note that,
\begin{align}
    -P'(\ep)=-D'(\ep) = C'(\ep) &= \f{d}{d \ep} \Phi(u(\ep), \ep) \nn\\
    &= \grad_u\Phi(u(\ep), \ep) \f{d u}{d \ep} + \f{\p}{\p \ep}\Phi(u(\ep), \ep) \nn\\
    &=\f{\p}{\p \ep}\Phi(u(\ep), \ep) \nn\\
    &= -\sum_s \lt[ \f{\sum_r e_{rs} c_{rs} }{\sum_r e_{rs}} \rt]\nu_s, \label{OT1storder}
\end{align}
where we have used the first order optimality condition $\grad_u\Phi(u(\ep), \ep) = 0, \forall \ep \geq 0$. Since it is easy to verify that $\forall i, u_i(0) = 0$, we have $e_{ij} = \mu_i$ when $\ep = 0$, therefore: 
\be\label{OT1storder0}
    C'(0) = -\sum_s \lt[ \f{\sum_r c_{rs} \mu_r}{\sum_r \mu_r} \rt] \nu_s = -\E[c(X, Y)].
\ee
The expectation in \eqref{OT1storder0} is taken with respect to the product measure $\mu \ot \nu$. \footnote{Note that \eqref{OT1storder0} could more easily be seen by applying the envelope theorem to the primal problem \eqref{eqn: primal regularized 2 marginal OT}  and noting the unique optimality of product measure  at $\ep=0$ in that equation.  However, we will require \eqref{OT1storder} to compute higher order derivatives.}

For the second order derivative,
\begin{align}
    C''(\ep) &= \f{d}{d \ep} \lt( \f{\p}{\p \ep}\Phi(u(\ep), \ep) \rt) \nn\\
    &= \lt[ \grad_u\f{\p}{\p \ep}\Phi(u(\ep), \ep)\rt]^T \f{d u}{d \ep} + \f{\p^2}{\p \ep^2}\Phi(u(\ep), \ep) \nn\\
    &= \lt[ \grad_u\f{\p}{\p \ep}\Phi(u(\ep), \ep)\rt]^T \lt[ -D^2_{uu} \Phi(u(\ep), \ep) \rt]^{-1}\lt[ \grad_u\f{\p}{\p \ep}\Phi(u(\ep), \ep)\rt] + \f{\p^2}{\p \ep^2}\Phi(u(\ep), \ep).
     \label{OT2ndorder}
\end{align}
The last equality is due to the ODE  \eqref{discreteOTExtraLinConsEntODE}. We note that evaluating this quantity at the point $\ep=0$ (where $u(0) =0$ is known) involves only derivatives of $\Phi$.  We relegate the detailed calculation to Appendix \ref{AppC2ndDerCal} and provide only the result here:

\be \label{eqn: second derivative of optimal cost at 0}
    C''(0) = \foe \biggl( \lt(\E[c(X, Y)] \rt)^2 + \E[c^2(X, Y)] - \E \lt[(\E [c(X, Y)| X])^2\rt] - \E \lt[(\E [c(X, Y)| Y])^2\rt]\biggr), 
\ee
where the expectations are with respect to product measure $\mu \otimes \nu$, or, equivalently,

\begin{align*}
   C''(0) =  &\frac{1}{\eta} \Biggl( \lt(\int_{X \times Y} c(x,y) d\mu(x)d\nu(y) \rt)^2 + \int_{X \times Y} c^2(x,y) d\mu(x)d\nu(y) \\
    - & \int_Y \lt(\int_X c(x,y) d\mu(x) \rt)^2 d\nu(y) - \int_X \lt(\int_Y c(x,y) d\nu(y) \rt)^2 d\mu(x) \Biggr).
\end{align*}
We note that higher order derivatives can be evaluated similarly; for instance, one can find $C'''(\ep)$ by differentiating \eqref{OT2ndorder} using the chain rule, and then use the ODE \eqref{discreteOTExtraLinConsEntODE} to eliminate the $\frac{du}{d\ep}$ terms and substitute $u(0)=0$ to determine $C'''(0)$.  The result is a closed form expression involving only derivatives of the function $\Phi$ at $(u,\ep)=(0,0)$, which can readily be computed.

\begin{remark}
 $P(\ep)$ as defined by \eqref{eqn: primal regularized 2 marginal OT} is clearly concave, as an infimum of affine functions.  Therefore, we must have $C''(0) =-P''(0) \geq 0$.

 We do not see another way to prove that the expression in \eqref{eqn: second derivative of optimal cost at 0} is non-negative.  However, the fact that expectations are taken with respect to product measure is crucial, as the expression can in fact be negative for more general couplings.  To see this, note that 
    \begin{align*}
        \text{Var}[\E[c(X,Y)|X]] &= \E[(\E[c(X, Y)| X])^2] - (E[c])^2\\
        \E[\text{Var}[c(X,Y)|X]] &= \E[c^2] - \E[(\E[c(X, Y)| X])^2] ,
    \end{align*}
    and consider the case $\mu = \nu$ such that $\mu$ is supported on more than one point, $c(x, y) = x + y$ and the coupling is given by $X=Y$. In this case, $\E[(\E[c(X, Y)| Y])^2] = \E[(\E[c(X, Y)| X])^2]$ and the non-negativity of the quantity in \eqref{eqn: second derivative of optimal cost at 0} becomes
    \be
        \E[\text{Var}[c(X,Y)|X]] \geq \text{Var}[\E[c(X,Y)|X]]. \nn
    \ee
    However, it is easy to calculate that $\E[\text{Var} [c(X,Y)|X]] = 0$ but $\text{Var}[\E[c(X,Y)|X]] = 4 \text{Var}[X] \neq 0$, which violates the above inequality.
\end{remark}

\section{Numerical examples}\label{numericSec}
In this section we exploit the ODE \eqref{discreteOTExtraLinConsEntODE} to compute numerical approximations of solutions to the optimal transport problem and several variants.  In all the examples below, we discretize the ODE in $\ep$ and solve using a 4th-order Runge-Kutta method.

To verify the accuracy of our new numerical method, we also compute solutions via the Sinkhorn algorithm in each example in the first four subsections below; in all cases, the values of the optimal costs obtained from the Sinkhorn are very close to the values obtained from the ODE method\footnote{ A similar comparison is not provided for the example in the last subsection, since its purpose is somewhat different; in particular, one of the marginals of the curve $\gamma(\ep)$ is in fact the main object of interest there, rather than the final value $\gamma(1)$, and this curve of marginals is not as readily obtained by the Sinkhorn method.}.  Furthermore, we have chosen examples for which explicit solutions $\gamma_0$ of the unregularized problem \eqref{eqn: unregularized primal} are known. This then yields a lower bound on the optimal value in \eqref{EOT_linCons_prim}.  Since $\gamma_0$ is a competitor in \eqref{EOT_linCons_prim}, it can also be used to compute an upper bound for this problem; thus, we obtain an interval in which the optimal value must fall:
\begin{equation}\label{general optimal range}
 \int_X c d\ga_0\leq  \inf_{\ga \in \Pi^Q(\mu^1,...,\mu^n)} \int_X c d\ga + \eta H_{\ot_{i=1}^n \mu^i}(\ga) \leq   \int_X c d\ga_0 + \eta H_{\ot_{i=1}^n \mu^i}(\ga_0).
\end{equation}
In the examples in the first four subsections below, the values obtained via both the Sinkhorn and ODE method do indeed fall with this interval.  

Throughout this section, we will consider problems with at most three marginals which will be represented by the symbols $\mu$, $\nu$ and $\theta$.  We will denote the corresponding spaces $X$, $Y$ and $Z$.  
We perform all numerical calculations in Python on 13th Gen Intel(R) Core(TM) i7-13620H 2.40 GHz Notebook.

\subsection{Two marginal optimal transport}

We begin with the two marginal optimal transport problem.  Let $\mu = \sum_r \mu_r \de_{x_r}, \nu = \sum_s \nu_s \de_{y_s}$ be discrete measures and let the cost matrix be $c_{rs} = c(x_r, y_s)$. For fixed $\eta > 0$, we aim to solve:
\be 
    \inf_{u,v} \Phi(u, v, \ep),
\ee
where
\be\label{OTdualNumeric}
    \Phi(u, v, \ep) = - \sum_r u_r \mu_r - \sum_s v_s \nu_s + \eta \sum_{r,s} \et \lt[\foe (u_r + v_s - \ep c_{rs})\rt] \mu_r \nu_s.
\ee
It well known that if we fix $u_0 = v_0 = 0$, then there will be a unique minimizer of \eqref{OTdualNumeric}. This corresponds to removing the corresponding column to the $A$ in \eqref{OTExLinConsFunc1} so that the reduced version has full column rank, as discussed more generally in Lemma \ref{lem: reduction of A}.
 
As in the previous section, it will be convenient to further reduce the number of variables, using the first order optimality condition:
\begin{align}
    v_j &= - \eta \log\lt[ \sum_r \et \lt[ \foe(u_r - \ep c_{rj}) \rt] \mu_r \rt] =:v_j(u).\label{OTdual1storderV}
\end{align}

We simplify by substituting \eqref{OTdual1storderV} into \eqref{OTdualNumeric} to yield the new dual objective function: 
\be\label{OTmodifiedDual}
    \bar \Phi(u, \ep) = \Phi(u, v(u), \ep) = - \sum_r u_r \mu_r + \eta \sum_s \log \lt[\sum_r \et \lt[\foe (u_r - \ep c_{rs}) \rt] \mu_r \rt] \nu_s + \eta.
\ee
The unique minimizer $u(\ep)$ of $u \mapsto \bar \Phi(u, \ep)$ will clearly still satisfify the ODE  \eqref{discreteOTExtraLinConsEntODE} (with $\bar \Phi$ in place of $\Phi$).

\begin{remark}\label{rem: reduction in number of variables}
The ODE induced by the function \eqref{OTmodifiedDual}  inherits the well-posedness related to the original function \eqref{OTdualNumeric}. The gradient and the Hessian of the function \eqref{OTmodifiedDual} are:
\begin{align*}
    \grad \bar \Phi(u, \ep) &= \grad_u \Phi(u, v(u), \ep) + \grad_v \Phi(u, v(u), \ep) D v(u)\\
    D^2 \bar \Phi(u, \ep) &= D_{uu}^2  \Phi(u, v(u), \ep) + D_{uv}^2  \Phi(u, v(u), \ep) D v(u)+ D v(u)^T D_{vu}^2  \Phi(u, v(u), \ep)\\
    & \qquad + D v(u)^T D_{vv}^2  \Phi(u, v(u), \ep) D v(u) + \grad_v \Phi(u, v(u), \ep) D^2 v(u).
\end{align*}
The last term vanishes since $\grad_v \Phi = 0$. Therefore, we have:
\be
    D^2 \bar \Phi = [I, D v^T] D^2 \Phi \begin{bmatrix}
    I \\
    D v \\
    \end{bmatrix}.
\ee
$D^2\bar \Phi$ then inherits lower bounds from $D^2 \Phi$, and so $\bar \Phi$ is uniformly convex on compact sets.  The well posedness of the ODE satisfied by $u$ then follows exactly as in Theorem  \ref{wellposeODE}.  We will abuse notation below by referring to the function of $u$ and $\ep$ only as $\Phi$ (rather than $\bar \Phi$ as above), and, in subsequent subsections, we will make similar reductions in the number of variables; the well-posedness of the corresponding ODEs will follow similarly and we will not comment on it in detail.
\end{remark} 

Let 
\be
    e_{ij} = \et \lt[\foe (u_i - \ep c_{ij} ) \rt] \mu_i, \qquad e_{ij}^c = \et \lt[\foe (u_i - \ep c_{ij} ) \rt] c_{ij} \mu_i. \nn
\ee
Then the Hessian matrix for $\Phi$ will be the symmetric matrix:
\be
    \biggl(D_{uu}^2 \Phi(u(\ep), \ep) \biggr)_{i_1i_2} = \foe \sum_s \lt[\f{e_{i_1s}}{\sum_r e_{rs}} \de_{i_1 i_2} - \f{e_{i_1s} \cdot e_{i_2s}}{(\sum_r e_{rs})^2} \rt] \nu_s,
\ee
where $\de_{i_1i_2}$ is the Kronecker delta and the mixed derivative is:
\be
    \biggl( \grad_u \f{\p}{\p \ep} \Phi(u(\ep), \ep) \biggr)_i = -\foe \sum_s \lt[\f{e_{is}^c}{\sum_r e_{rs}} - \f{e_{is} \cdot e_s^c}{(\sum_r e_{rs})^2} \rt] \nu_s.
\ee
It is easy to verify that the initial condition for the system of equations is $u_i = 0, \forall i$. 

We take $\eta = 0.002$ and discretize $[0,1]$ with 100 time steps for the variable $\ep$. Both the marginals $\mu$ and $\nu$ are uniformly supported on 100 evenly spaced grid points on $[0,1]$. We solved the ODE with two costs: the attractive cost $c(x,y) = (y - x)^2$ and the repulsive cost $c(x,y) = -\log(0.1 + |x - y|)$. It is well known that the optimal measure for the attractive cost is induced by the identity map in our setting. A closed form for the optimal measure for the repulsive cost is also known \cite{colomboDePascaleDiMarino2015}. We provide the evolution of the primal coupling at 16 evenly distributed moments for $\ep$ between 0 and 1 for both attractive and repulsive type costs in Figure \ref{OTAttractgaEvolve}, and \ref{OTRepulgaEvolve4}, respectively.  Note that the solutions at $\ep =1$ agree qualitatively with the known solutions for both costs. Tables \ref{OTDataTable4} and \ref{OTDataTable8} summarize the corresponding numerical calculations for the  attractive and repulsive type costs, respectively. In both cases, the optimal values are similar to those found by the Sinkhorn algorithm, and lie within the range given by \eqref{general optimal range}.

\begin{figure}
    \centering
    \includegraphics[width=1\linewidth]{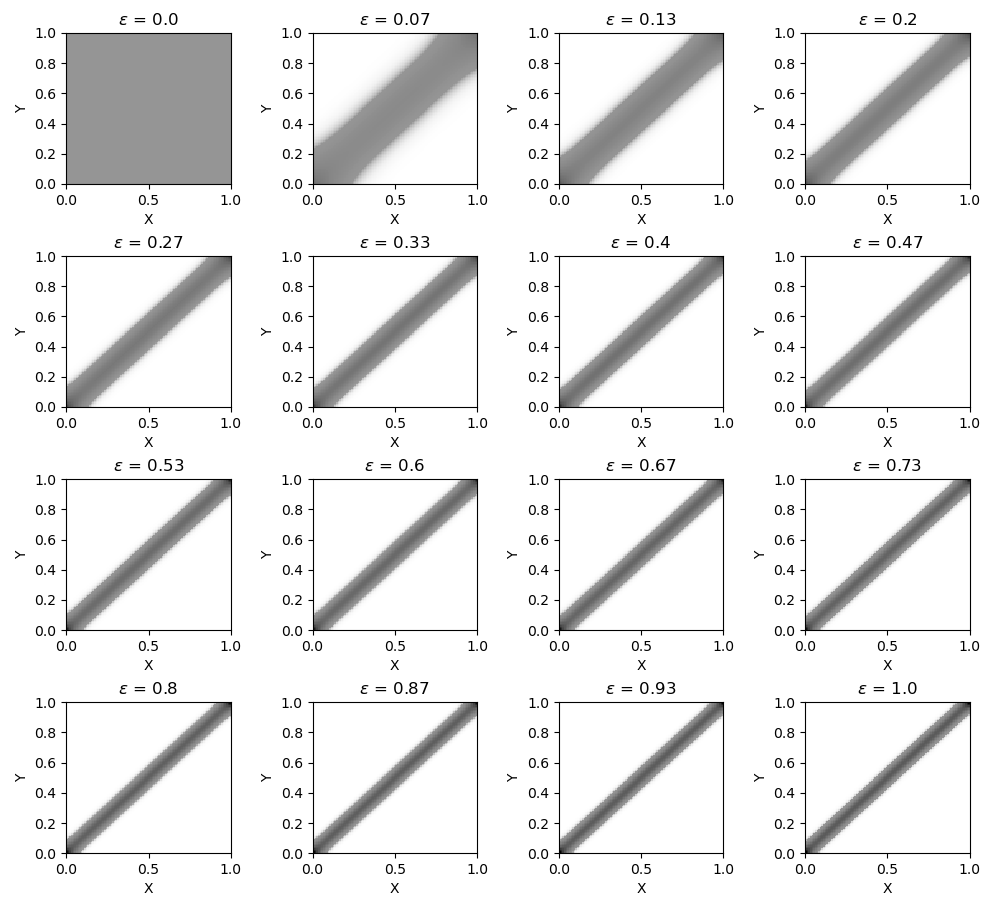}
    \caption{Evolution of the primal coupling at 16 evenly distributed values of $\ep$ between 0 and 1 calculated by the ODE method for two marginal optimal transport with attractive cost $c(x, y) = |y - x|^2$.}
    \label{OTAttractgaEvolve}
\end{figure}

\begin{figure}[h!]
    \centering
    \includegraphics[width=1\linewidth]{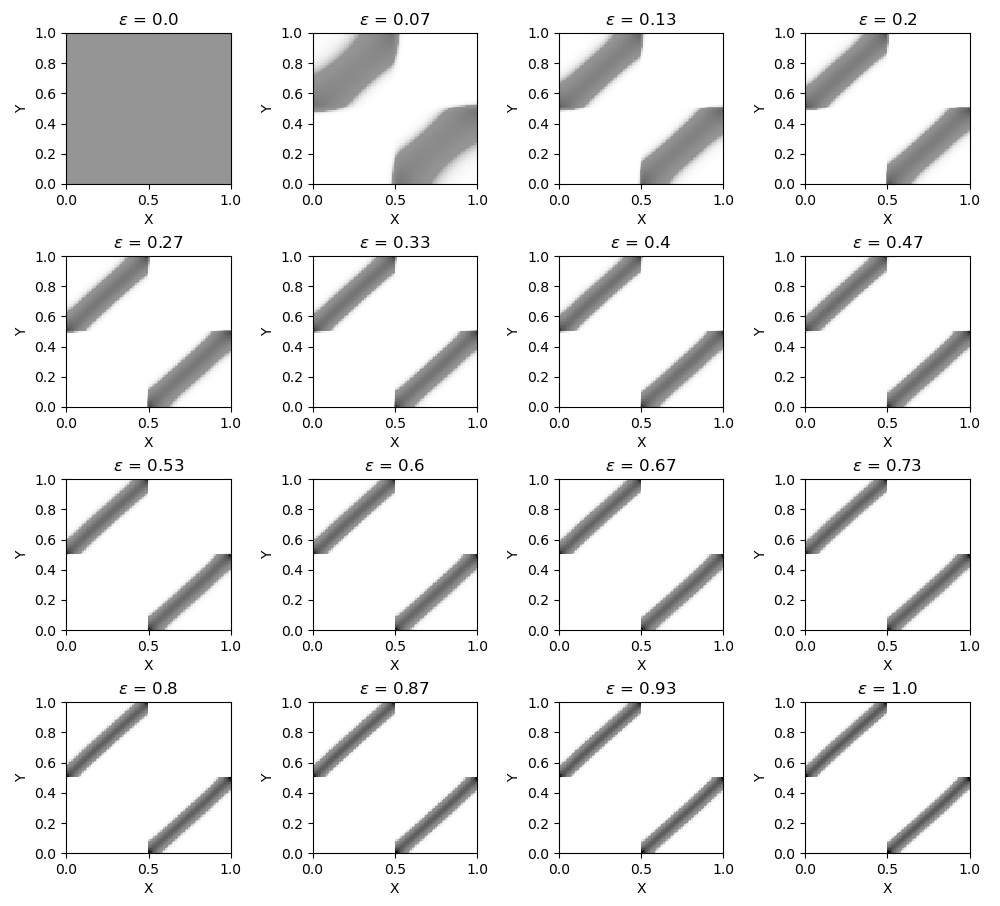}
    \caption{Evolution of the primal coupling at 16 evenly distributed values of $\ep$ between 0 and 1 calculated by the ODE method for two marginal optimal transport with repulsive cost $c(x, y)  = -\log(0.1 + |x - y|)$.}
    \label{OTRepulgaEvolve4}
\end{figure}

\begin{table}
    \centering
    \begin{tabular}{|c|c|c|}
        \hline
                       & ODE & Sinkhorn \\ \hline
        Computed regularized primal value  & 0.0050  & 0.0052\\ \hline
        Optimal unregularized primal value   & 0  & 0\\ \hline
        Optimal unregularized primal value + Entropy  & 0.0092  & 0.0092 \\ \hline
        Iterations     & 100      & 324   \\ \hline
        CPU time (sec) & 1.40    & 0.04   \\ \hline
    \end{tabular}
    \caption{Comparison of the performance between 4-th order Runge-Kutta ODE method and Sinkhorn algorithm for two marginal optimal transport with attractive cost $c(x, y)  = (y - x)^2$.}
    \label{OTDataTable4}
\end{table}

\begin{table}
    \centering
    \begin{tabular}{|c|c|c|}
        \hline
                       & ODE & Sinkhorn \\ \hline
        Computed regularized primal value  & 0.5033  & 0.5080\\ \hline
        Optimal unregularized primal value   & 0.5024  & 0.5024\\ \hline
        Optimal unregularized primal value + Entropy  & 0.5117  & 0.5117 \\ \hline
        Iterations     & 100      & 684   \\ \hline
        CPU time (sec) & 1.49    & 0.15   \\ \hline
    \end{tabular}
    \caption{Comparison of the performance between 4-th order Runge-Kutta ODE method and Sinkhorn algorithm for two marginal optimal transport with repulsive cost $c(x, y)  = -\log(0.1 + |x - y|)$.}
    \label{OTDataTable8}
\end{table}

\subsection{Multi-marginal optimal transport}
We turn now to a three marginal optimal transport problem; in this setting our approach here is closely related to the work in \cite{nenna2022}.  However, the present framework can accommodate  completely general cost functions (whereas the work in \cite{nenna2022} applies only to pairwise costs). Even for pairwise costs, the solution $\gamma(\ep)$ as a function of the regularization parameter here is likely of more direct interest than the interpolation between two and multi-marginal solutions produced by the method in \cite{nenna2022}.

Let $\mu = \sum_r \mu_r \de_{x_r}, \tht = \sum_s \tht_s \de_{y_s}, \nu = \sum_t \nu_t \de_{z_t}$ and $c_{rst} = c(x_r, y_s, z_t)$ be the cost tensor. For fixed $\eta > 0$, we minimize the following objective function:
\be\label{MMargOTdualNumeric}
    \Phi(u, v, w, \ep) = -\sum_r u_r \mu_r - \sum_s v_s \tht_s - \sum_t w_t \nu_t + \eta \sum_{r,s,t} \et \lt[\foe(u_r + v_s + w_t - \ep c_{rst}) \rt] \mu_r \tht_s \nu_t.
\ee
As in the previous section, we substitute the first order condition with respect to one variable,
\begin{align}
    w_k &= - \eta \log\lt[\sum_{rs} \et\lt[\foe (u_r + v_s - \ep c_{rsk}) \rt]\mu_r \tht_s \rt] ,\label{MMargOTdualW}
\end{align}
into \eqref{MMargOTdualNumeric} to reduce the number of unknown variables:
\be
    \Phi(u, v, \ep) = -\sum_r u_r \mu_r - \sum_s v_s \tht_s + \eta \sum_t \log \lt[\sum_{r,s} \et \lt[\foe (u_r + v_s - \ep c_{rst}) \rt] \mu_r \tht_s \rt] \nu_t + \eta.
\ee
As in the two marginal case (see Remark \ref{rem: reduction in number of variables}) , the minimizers $u(\ep, v(\ep))$ will satisfy the well-posed ODE \eqref{discreteOTExtraLinConsEntODE}.

We refer the reader to appendix \ref{AppMMargOT2ndDerSec} for the Hessian of $\Phi$ with respect to the dual potential variables $\varphi=(u, v)$ and the mixed derivative with respect to $(u, v)$ and $\ep$. 

We set the initial values  to be $u_i = v_j = 0, \forall i, j$ which can be verified to be the unique solution for $\ep = 0$ satisfying  our normalization $u_0 = v_0 = 0$. For the numerical simulation, each marginal to be uniform on 99 evenly spaced grid points (for simplicity of calculating the true optimal measure \cite{colomboDePascaleDiMarino2015}) on $[0,1]$, $\eta = 0.006$ and $\ep \in [0,1]$ with 100 time steps. The cost function is $c(x, y, z) = d(x,y) + d(y, z) + d(x, z)$ where $d(x,y) = -\log(0.1 + |x - y|)$, which is the same as the first example in \cite{nenna2022}. Figure \ref{ThreeMargOTga} shows the 2D projection to all the pairs of marginals at $\ep = 1$, which agrees well with the known explicit solution to the unregularized problem in \cite{colomboDePascaleDiMarino2015}. From Table \ref{threemarginalDataTable}, the values obtained from the ODE and Sinkhorn method agree quite well and both fall within the interval given by \eqref{general optimal range}.  Note in addition that in this case, the ODE method actually takes less running time and fewer iterations than the Sinkhorn.

\begin{figure}[h!]
    \centering
    \includegraphics[width=1\linewidth]{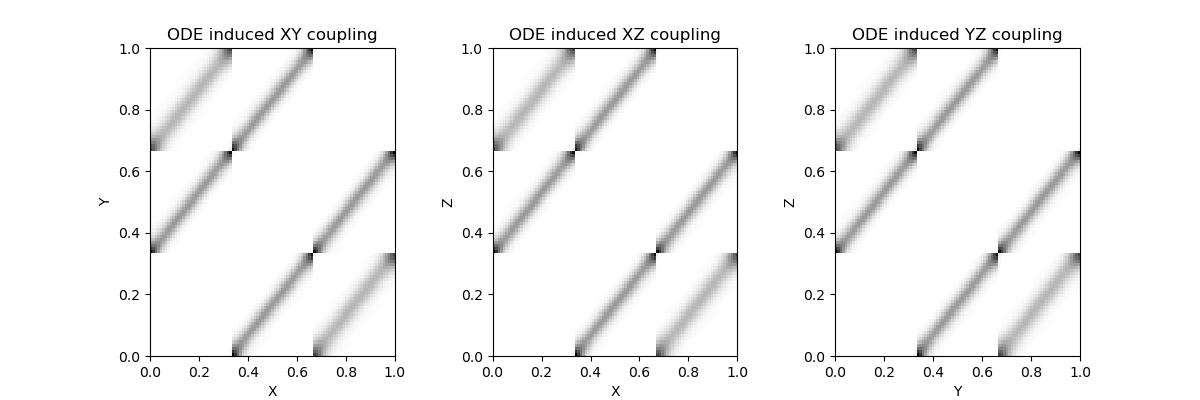}
    \caption{The optimal measure for three marginal optimal transport with cost $c(x, y, z) = d(x,y) + d(y, z) + d(x, z)$ where $d(x,y) = -\log(0.1 + |x - y|)$ generated by ODE}
    \label{ThreeMargOTga}
\end{figure}

\begin{table}
    \centering
    \begin{tabular}{|c|c|c|c|c|}
        \hline
                       & ODE & Sinkhorn \\ \hline
        Computed regularized primal value  & 1.9163  & 1.9193\\ \hline
        Optimal unregularized primal value   & 1.9137  & 1.9137\\ \hline
        Optimal unregularized primal value + Entropy  & 1.9647  & 1.9647 \\ \hline
        Iterations     & 100      & 1006   \\ \hline
        CPU time (sec) & 12.40    & 47.46   \\ \hline
    \end{tabular}
    \caption{Comparison of the performance between 4-th order Runge-Kutta ODE method and Sinkhorn algorithm for three marginal optimal transport problem.}
    \label{threemarginalDataTable}
\end{table}

\subsection{Martingale optimal transport}
Given probability  measures $\mu$ and $\nu$, with $\mu$ dominated by $\nu$ in convex order (denoted by $\mu \preceq_c \nu)$, the (one-period) martingale optimal transport falls into the class of optimal transport problems with extra linear constraints \eqref{eqn: unregularized primal} when we take the subspace $Q$ to be the set $M$ of martingale test functions:
\be
    M = \{g(x)(y - x)| g(x) \in  C_b(X)\}.
\ee
Note that $\int m d\pi = 0, \forall m \in M$ is equivalent to the classical martingale condition:
\be
    \int_Y y d\pi_x(y) = x,
\ee
where $\pi = \pi_x \ot \mu$ is the regular disintegration of $\pi$ with respect to $\mu$, or in probabilistic notation:
\be
    \E_\pi[Y|X] = X.
\ee
The primal problem of martingale optimal transport problem is:
\be
    \inf_{\pi \in \Pi^M(\mu, \nu)} \int_{X \times Y} c(x, y) d\pi(x, y),
\ee
and the corresponding dual problem:
\be
    \sup \int_X u(x) d\mu(x) + \int_Y v(y) d\nu(y), 
\ee
subject to $u(x) + v(y) + g(x)(y - x) \leq c(x, y)$, among all $u \in L(\mu), v \in L(\nu), g \in C(X)$. For the numerical example, let $\mu = \sum_r \mu_r \de_{x_r}, \nu = \sum_s \nu_s \de_{y_s}$ be discrete measures such that $\mu$ is less than $\nu$ in convex order. Then the cost matrix will be $c_{rs} = c(x_r, y_s)$. The entropic regularized problem with fixed $\eta$ will be:
\be
    \inf_{u,v,g} \Phi(u, v, g, \ep),
\ee
where
\be\label{MOTdualcost1}
    \Phi(u, v, \ep) = -\sum_r u_r \mu_r - \sum_s v_s \nu_s + \eta \sum_{r,s} \et \lt[ \foe (u_r + v_s + g_r(y_s - x_r)- \ep c_{rs}) \rt] \mu_r \nu_s.
\ee
The first order optimality condition implies
\begin{align}
    u_i &= - \eta \log\lt[ \sum_s \et \lt[ \foe(v_s + g_i(y_s - x_i) - \ep c_{is}) \rt] \nu_s \rt]\label{MOTdual1storderU}\\
    v_j &= - \eta \log\lt[ \sum_r \et \lt[ \foe(u_r + g_r(y_j - x_r) - \ep c_{rj}) \rt] \mu_r \rt]\\
    0 &= \sum_s \et \lt[ \foe(u_i + v_s + g_i (y_s - x_i) - \ep c_{is}) \rt] (y_s - x_i) \nu_s \label{MOTdual1storderG}.
\end{align}
We will use this set of equations in the Sinkhorn algorithm. One important note should be made towards the fitting of $g_i$ from $u_i$ and $v_i$ is not as straight forward as in the unconstrained optimal transport. Instead, we need to solve the root of the equation \eqref{MOTdual1storderG} for each $i$. Slightly longer computation time is expected if we apply Newton's method for finding the root.

On the other hand, we can further simplify \eqref{MOTdualcost1} by substituting \eqref{MOTdual1storderU}  into it. By abuse of notation, the dual cost becomes
\be\label{MOTdualcost2}
    \Phi(v, g, \ep) \coloneqq -\sum_s v_s \nu_s + \eta \sum_r \log\lt[ \sum_s \et \lt[ \foe(v_s + g_r(y_s - x_r) - \ep c_{rs}) \rt] \nu_s \rt]\mu_r + \eta.
\ee
The minimizers $v(\ep), g(\ep)$ satisfy the well-posed ODE \eqref{discreteOTExtraLinConsEntODE}.

We refer the reader to Appendix \ref{AppMOT2ndDerSec} for the Hessian of $\Phi$ with respect to the dual potential variables $v, g$ and the mixed derivatives with respect to $v, g$ and $\ep$. Unlike in the unconstrained optimal transport problem, $v_j = g_i = 0$ is no longer an initial condition for the system of equations since the product measure is in general not a martingale measure (nevertheless, setting $v_0 = g_0 = 0$ will reduce $A$ from \eqref{eqn: contraint matrix A} into a full column rank matrix). Instead, we can solve the initial system of equations, \eqref{MOTdual1storderU} - \eqref{MOTdual1storderG} with $\ep = 0$, by the Sinkhorn algorithm.  Note that by our numerical experiment, the running time for the Sinkhorn algorithm is much faster for $\ep = 0$ then $\ep = 1$. This keeps the overall calculation time for the ODE method competitive with the pure Sinkhorn method to minimize the $\ep=1$ objective function.

The cost function is $c(x, y) = \et(-x) \cdot y^2$ which is of martingale Spence–Mirrlees type, hence  the unregularized optimal coupling is the left-monotone coupling \cite{beiglbock2016}. The $x$-marginal $\mu$ is uniformly distributed on 100 evenly spaced grid points on $[-0.3, 0.3]$ while the $y$-marginal $\nu$ is uniformly distributed on 200 evenly spaced grid points on $[-1, 1]$. We set $\eta = 0.006$ and $\ep = 1$ for this numerical example. We employ 25 steps to discretize $\ep$ on $[0,1]$. The plot in Figure \ref{onePerMartPlotga}, the plot shows that the ODE method does indeed approximate the left-monotone measure. From Table \ref{threemarginalDataTable}, we see that both the ODE method and Sinkhorn algorithm yield comparable optimal values, and both fall inside the desired interval \eqref{general optimal range}.  Again, in this case, the ODE method takes fewer iterations and less running time.

\begin{figure}[h!]
    \centering
    \includegraphics[width=0.5\linewidth]{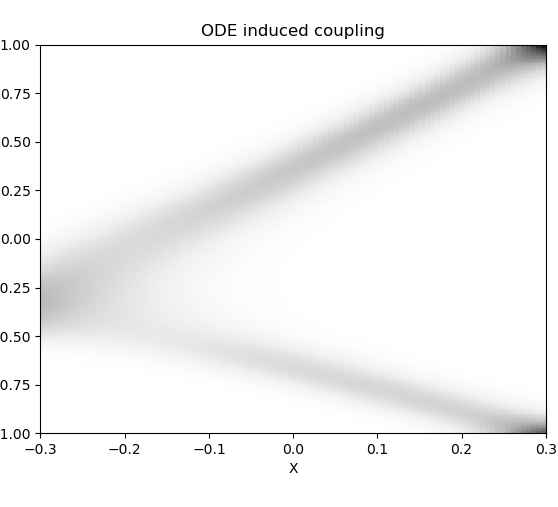}
    \caption{The optimal measure for martingale optimal transport with cost $c(x, y) = \et(-x) \cdot y^2$ generated by ODE}
    \label{onePerMartPlotga}
\end{figure}

\begin{table}
    \centering
    \begin{tabular}{|c|c|c|c|c|}
        \hline
                       & ODE & Sinkhorn \\ \hline
        Computed regularized primal value  & 0.2990  & 0.2990\\ \hline
        Optimal unregularized primal value   & 0.2964  & 0.2964\\ \hline
        Optimal unregularized primal value + Entropy  & 0.3211  & 0.3211 \\ \hline
        Iterations     & 25      & 152   \\ \hline
        CPU time (sec) & 1.44    & 3.38   \\ \hline
    \end{tabular}
    \caption{Comparison of the performance between 4-th order Runge-Kutta ODE method and Sinkhorn algorithm for martingale optimal transport problem.}
    \label{threemarginalDataTable}
\end{table}

\subsection{Multi-period martingale optimal transport}\label{subsect: multi-period martingales}
We consider a 3-period martingale optimal transport problem.  Let $\mu = \sum_r \mu_r \de_{x_r} \in \mathcal{P}(X), \tht = \sum_s \tht_s \de_{y_s} \in \mathcal{P}(Y), \nu = \sum_t \nu_t \de_{z_t} \in \mathcal{P}(Z)$ such that $\mu \preceq_c \tht \preceq_c \nu $. The linear constraints are:
\be
   Q= MM = \{g(x)(y - x) + h(x,y)(z - y)| g(x) \in C_b(X), h(x,y) \in C_b(X,Y)\}. \nn
\ee
Note that 
\be
    \E_\pi[Y|X] = X, \qquad \E_\pi[Z|X,Y] = Y \qquad \iff \pi \in \Pi^{MM}(\mu, \tht, \nu). \nn
\ee

With the cost tensor $c_{ijk} = c(x_i, y_j, z_k)$, the entropic regularized problem with fixed $\eta$ for the dual problem is
\be
   \inf_{u,v,w,g,h} \Phi(u, v, w, g, h, \ep) \nn
\ee
where
\begin{align}\label{MMOTdualCost1}
    \Phi(u, v, w, g, h, \ep) = &- \sum_r u_r \mu_r - \sum_s v_s \tht_s - \sum_t w_t \nu_t \\
    &\qquad + \eta \sum_{r,s,t} \et \biggl[ \foe \bigl( u_r + v_s + w_t + g_r(y_s - x_r) \nn\\ 
    &\qquad \qquad + h_{rs}(z_t - y_s) - \ep c_{rst} \bigr) \biggr] \mu_r \tht_s \nu_t. \nn
\end{align}

To implement the Sinkhorn algorithm, we need the first order optimality condition:
\begin{align}
    u_i &= - \eta \log\lt[\sum_{st} \et\lt[\foe (v_s + w_t + g_i(y_s - x_i) + h_{is}(z_t - y_s) - \ep c_{ist}) \rt]\tht_s \nu_t \rt] \label{MMOTdualU}\\
    v_j &= - \eta \log\lt[\sum_{rt} \et\lt[\foe (u_r + w_t + g_r(y_j - x_r) + h_{rj}(z_t - y_j)) - \ep c_{rjt} \rt]\mu_r \nu_t \rt] \nn \\
    w_k &= - \eta \log\lt[\sum_{rs} \et\lt[\foe (u_r + v_s + g_r(y_s - x_r) + h_{rs}(z_k - y_s) - \ep c_{rsk}) \rt]\mu_r \tht_s \rt] \nn 
\end{align}
\begin{align}
    0 &= \sum_{s,t}  \et\lt[\foe (u_i + v_s + w_t + g_i(y_s - x_i) + h_{is}(z_t - y_s) - \ep c_{ist}) \rt] (y_s - x_i) \tht_s \nu_t \nn \\
    0 &= \sum_{t} \et\lt[\foe (u_i + v_j + w_t + g_i(y_j - x_i) + h_{ij}(z_t - y_j)) - \ep c_{ijt} \rt] (z_t - y_j) \nu_t. \nn
\end{align}
The fitting of $u_i, v_j, w_k$ are straight forward while we use Newton's method to solve for $g_i$ and $h_{ij}$ in each of the iteration.

On the other hand, by combining \eqref{MMOTdualCost1}
and \eqref{MMOTdualU}, with a slight abuse of notation again we set:
\begin{align}\label{MMOTdualCost2}
    \Phi(v, w, g, h, \ep) = &- \sum_s v_s \tht_s - \sum_t w_t \nu_t + \eta \sum_r \log \biggl[ \sum_{s,t} \et \biggl[ \foe \bigl( v_s + w_t\\
    &\qquad + g_r(y_s - x_r) + h_{rs}(z_t - y_s) - \ep c_{rst} \bigr) \biggr] \tht_s \nu_t \biggr] \mu_r + \eta, \nn
\end{align}
and note that the minimizing $v(\ep),w(\ep),g(\ep),h(\ep)$ solve the well-posed ODE \eqref{discreteOTExtraLinConsEntODE} after we fix $v_0, w_0, g_0$ and $h_{00}$ all to be 0.
We refer the reader to Appendix \ref{AppMMOT2ndDerSec} for the Hessian of $\Phi$ with respect to the dual potential variables $v, w, g, h$ and the mixed derivatives with respect to $v, w, g, h$ and $\ep$. 

For the numerical experiment, we took the marginals to be uniform on 30, 60 and 90 evenly spaced grid points on $X=[-0.1,0.1]$, $Y =[-0.4, 0.4]$ and $Z=[-1,1]$, respectively. 
We set $\eta = 0.006$ and $\ep \in [0,1]$ with 25 time steps. The cost function is $c(x, y, z) = y^2\et(-x) + z^2\et(-x)$ which is of martingale Spence-Mirrlees type. It is known that the optimal measure is the multi-period left-monotone coupling \cite{NutzStebeggTan20}. Since $\mu$ is a discrete measure, the multi-period left-monotone coupling is non-unique, but we know from  Lemma \ref{lem: convg min ent sol}, the optimal measures for regularized problems will converge to the left-monotone coupling with minimal entropy.  Explicitly, the left-monotone condition uniquely determines the two fold marginals  $\ga^{xy} = \ga_x^y \ot \mu$, $\ga^{xz} = \ga_x^z \ot \mu$, and so the only degrees of freedom are in the conditional probabilities $\ga_x^{yz}$ of $\ga = \ga_x^{yz} \ot \mu$, which must all be martingale couplings of $\ga_x^y$ and $\ga_x^z$. We compute the marginal couplings of these conditional probabilities withe minimal entropy and use these values in Table \ref{threeperiodMartOTDataTable}.

In Figure \ref{threePerMartPlotODEvsSK}, we compare the optimal measures calculated by the ODE and Sinkhorn algorithms. The top row of sub-graphs represents 2D projections of pairs of marginals obtained using the ODE method. The bottom row shows sub-graphs generated using the Sinkhorn algorithm. Theoretically, both the $xy$ and $xz$ projections are left monotone couplings for these particular marginals (at least in the unregularized limit, by the Lemma 6.5 of \cite{NutzStebeggTan20} and the definition of multi-period left-monotone coupling), and so the conditional probabilities $\gamma_{x}^y$ and $\gamma_x^z$ are supported on two disjoint consecutive intervals for each $x$. Although the $xy$ projection looks a little bit spread for the ODE generated measure, we encounter a similar situation for the measure generated by Sinkhorn algorithm. Nevertheless, as shown in the table \ref{threeperiodMartOTDataTable}, the values generated by both methods still fall in the desired range. 

\begin{figure}[h!]
    \centering
    \includegraphics[width=1\linewidth]{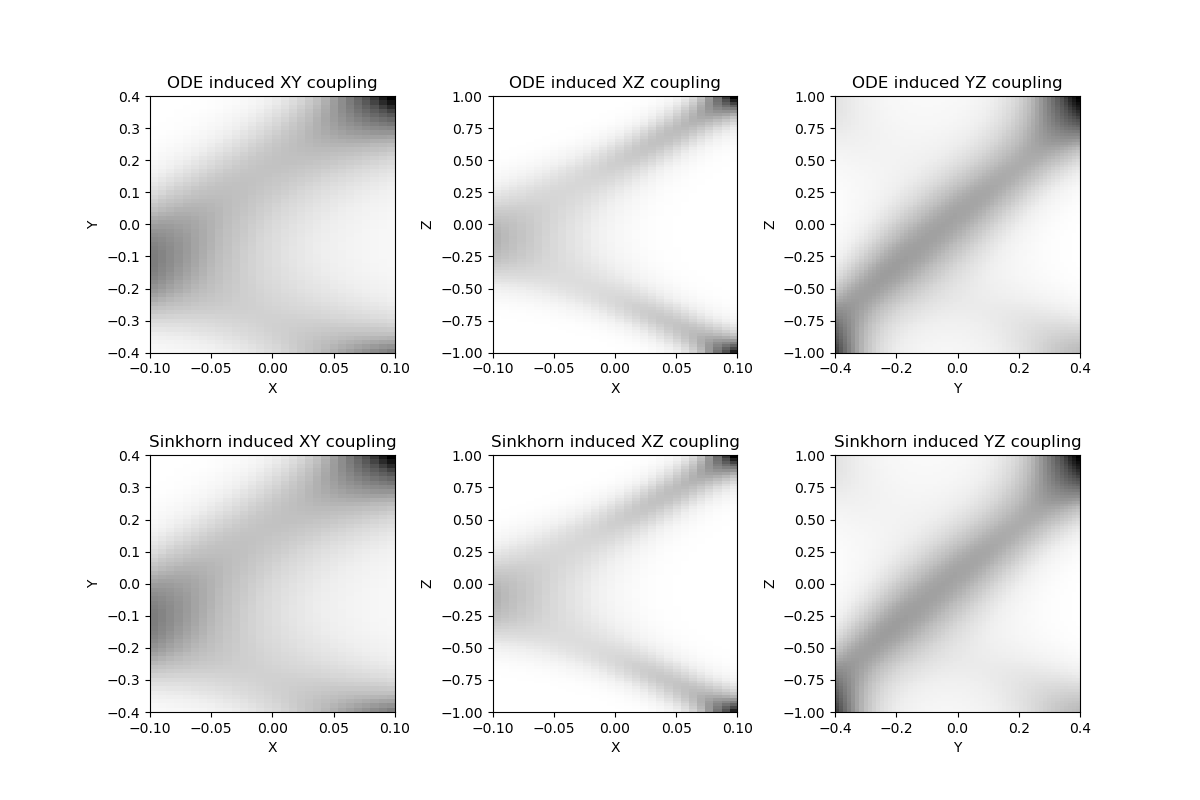}
    \caption{2D projections for pairs of marginals of the optimal measures for multi-period martingale transport problem. Top: generated by ODE method. Bottom: generated by Sinkhorn algorithm}
    \label{threePerMartPlotODEvsSK}
\end{figure}

\begin{table}[h!]
    \centering
    \begin{tabular}{|c|c|c|}
        \hline
                       & ODE & Sinkhorn \\ \hline
        Computed regularized primal value  & 0.3807  & 0.3807\\ \hline
        Optimal unregularized primal value   & 0.3767  & 0.3767\\ \hline
        Optimal unregularized primal value + Entropy  & 0.4127  & 0.4127 \\ \hline
        Iterations     & 25      & 60   \\ \hline
        CPU time (sec) & 8.71    & 7.40   \\ \hline
    \end{tabular}
    \caption{Comparison of the performance between 4-th order Runge-Kutta ODE method and Sinkhorn algorithm for multi-period martingale transport problem.}
    \label{threeperiodMartOTDataTable}
\end{table}
\subsection{Wasserstein geodesics and barycenters}\label{subsect: barycenters and geodesics}
 
 A slight variant of the theory developed in this paper can be used to compute, and provide a new perspective on, Wasserstein geodesics (also known as displacement interpolants \cite{McCann1994}) and barycenters \cite{aguehcarlier2011}. Let us consider, firstly, the computation of a geodesic (since we are considering a regularized problem, it would be more appropriate to refer to it as the \emph{entropic interpolant}) between two measures $\mu^1$ and $\mu^2$. The geodesic at time $\ep$ can be obtained as the second marginal of a coupling $\gamma\in\Prob(X^1\times Z\times X^2)$ whose first and third marginals are $\mu^1$ and $\mu^2$, respectively, and such that it minimizes an optimal transport problem with cost
\[ c_\ep(x^1,z,x^2)=(1-\ep)|x^1-z|^2+\ep|z-x^2|^2.\]

After regularizing, we minimize
$$
\int_{X^1 \times Z \times X^2}c_\ep d\gamma +\eta H_{\mu^1 \otimes \mathcal{U} \otimes \mathcal \mu^2}(\gamma).
$$\footnote{Note that here, since the $z$ marginal of $\gamma$ is not prescribed in the problem, we take the  second marginal of the reference measure to be uniform $\mathcal{U}$.}
Note that our formulation of the geodesic problem actually falls slightly outside the general ODE framework developed in Section \ref{sect: ODE} in two ways.  First, the problem is not a typical optimal transport problem, since the marginal of the variable $z$ is not prescribed (this, the displacement interpolant, is in fact what we are trying to find).  Secondly, the cost takes the form $c_\ep = c_0+\ep c$, rather than the form $c_\ep =\ep c$ dealt with above.  Nonetheless, the development of the ODE \eqref{discreteOTExtraLinConsEntODE} and proof of its well posedness follow almost exactly as above, and so we can solve the ODE to approximate the displacement interpolant.  Notice that now the parameter $\ep$ plays the role of the time. In Figure \ref{fig:entropic_geo} we plot the interpolant, between the uniform on $[0,1]$ and the sum of two gaussians, computed by using the ODE approach. 
\begin{figure}[h!]
        \TabFive{
        \includegraphics[width=.19\linewidth]{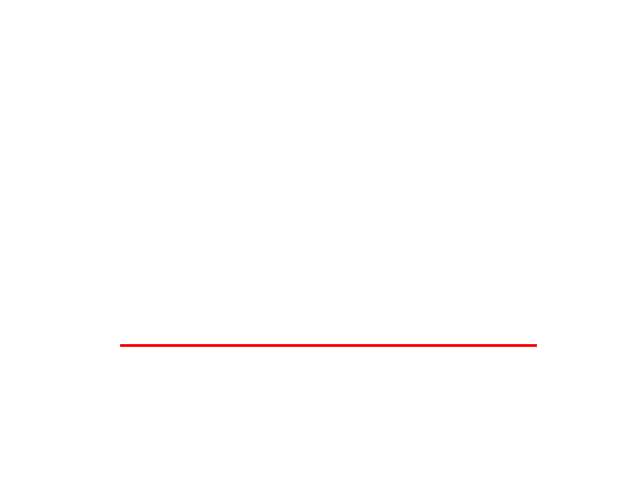}&
        \includegraphics[width=.19\linewidth]{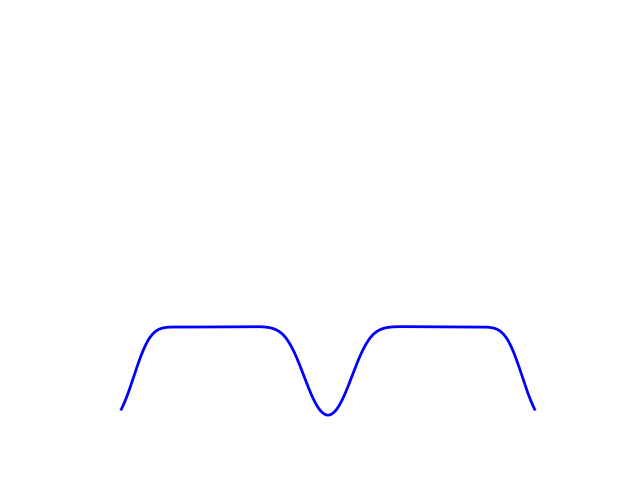}&
        \includegraphics[width=.19\linewidth]{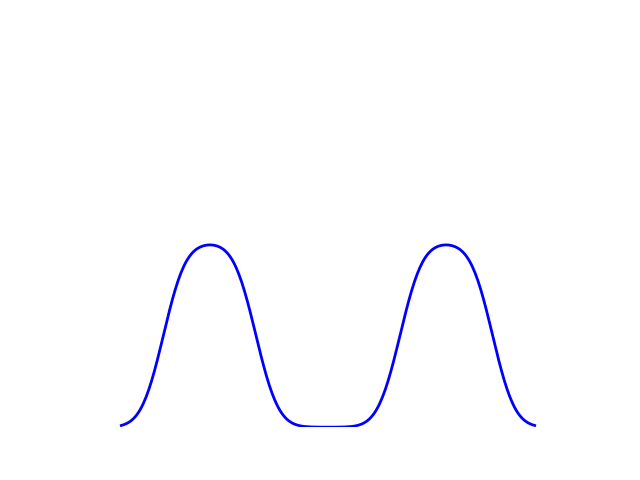}&
        \includegraphics[width=.19\linewidth]{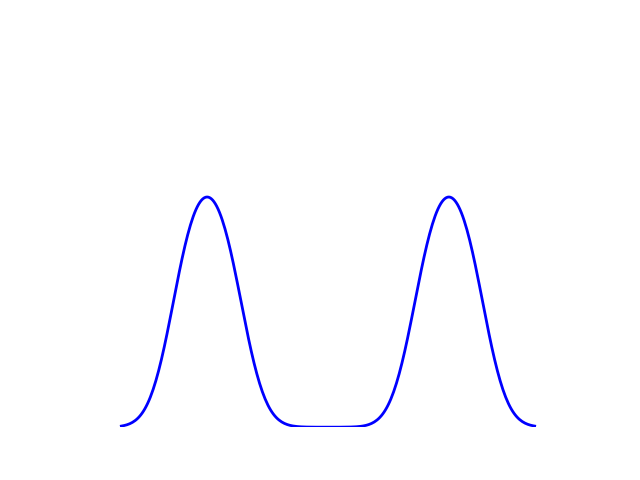}&
        \includegraphics[width=.19\linewidth]{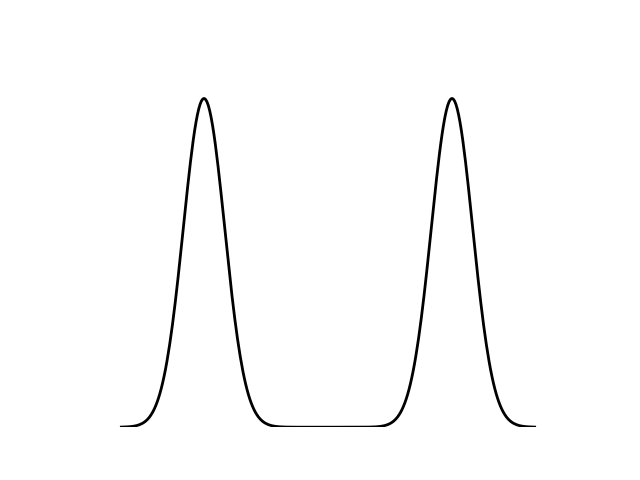}
        }
        \caption{Entropic interpolant at different time steps $\ep$}
        \label{fig:entropic_geo}
        \end{figure}
In a very similar way one can extend this approach to the computation of Wasserstein barycenter \cite{aguehcarlier2011}: in this case we look for a coupling $\gamma\in\Prob(\times_{i=1}^n X^i\times Z)$ having $n$ fixed marginals equal to $\mu^i$ such that it minimizes the optimal transport problem with cost
\[c(x^1,\ldots,x^n,z)=\sum_{i=1}^n\lambda_i(\ep)|x^i-z|^2,\]
where $\lambda_i(\ep)$ are the usual weights such that $\lambda_i(\ep)\geq 0$ and $\sum_i\lambda_i(\ep)=1$ for every $\ep$. In this case the ODE returns the barycenter for every weights $\lambda_i(\ep)$ as they vary in $\ep$.
\begin{remark}[Initial condition]
One difference with the theory we have developed above concerns the initial condition for the Cauchy problem: here, one has to solve a simplified entropic optimal transport problem where the cost is not zero. For instance, in the geodesic case, one should solve a three marginal  problem with cost $|x^1-z|^2$. In particular, by the optimality conditions, the optimal dual variable $u$ associated to the marginal $\mu^1$ has the following explicit form
\begin{equation}\label{eqn: potential for geodesics}
u_r=- \eta \log\lt[\sum_{s} Z\et\lt[\foe ( - \ep c_{rs}) \rt] \rt],
\end{equation}
where $c_{rs}$ is the cost $c_\ep$ above evaluated at $\ep=0$. Notice that since the potential associated to the  marginal $\mu^2$ is a constant we can easily set it to be equal to $0$ (or, equivalently, $Z=1$).

As for the barycenter problem, choose $\lambda_i$ such that at $\ep=0$ there is only one weight equals to 
$1$, so that the curve starts at one of the marginal measures; say $\mu^1$.  The cost function then becomes $|x^1-z|^2$, and the initial potentials can be determined similarly to above; that is, all potentials are equal to $0$ except for the one corresponding to $x^1$, which is determined again by formula \eqref{eqn: potential for geodesics}above with $Z=1$.
\end{remark}

\smallskip

\noindent{\textbf{Acknowledgments.}} L.N. is partially on academic leave at Inria (team Matherials) for the year 2023-2024 and acknowledges the hospitality if this institution during this period. His work  benefited from the support of the FMJH Program PGMO,  from H-Code, Université Paris-Saclay and from the ANR project GOTA (ANR-23-CE46-0001).
 The work of B.P. was partially supported by  National Sciences and Engineering Research Council of Canada Discovery Grant number  04658-2018.  The work of J.H. was completed in partial fulfillment of the requirements of a doctoral degree in applied mathematics at the University of Alberta.
\appendix

\section{Convergence of  entropic optimal transport with extra linear constraints to the unregularized limit}\label{AppPrimeConvProof}

We aim to show that the optimal measure for the entropic optimal transport problem with extra linear constraints will converge to an optimal solution of the unregularized problem that has the minimum entropy relative to the product measure. 
\begin{lemma}\label{LinCtrSetCompact}
    $\Pi^Q(\mu^1, ..., \mu^n)$ is compact in the weak topology. 
\end{lemma}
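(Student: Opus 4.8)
The plan is to show that $\Pi^Q(\mu^1,\dots,\mu^n)$ is both closed and (sequentially) relatively compact in the weak topology on probability measures, and hence compact. Since we are on bounded domains $X^i \subseteq \R^d$, the product space $X = X^1 \times \dots \times X^n$ is bounded, and (taking closures if necessary, or noting that in the discrete setting later $X$ is finite) we may as well work on a compact metric space. The space $\mathcal{P}(X)$ of Borel probability measures on a compact metric space is itself compact in the weak-$*$ topology by the Banach--Alaoglu theorem together with the Riesz representation theorem; this is the standard Prokhorov-type compactness. So the only real content is that $\Pi^Q(\mu^1,\dots,\mu^n)$ is a \emph{closed} subset of $\mathcal{P}(X)$.

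To prove closedness, I would take a net (or sequence, if $X$ is metrizable, which it is) $\gamma_k \in \Pi^Q(\mu^1,\dots,\mu^n)$ converging weakly to some $\gamma \in \mathcal{P}(X)$, and check that $\gamma$ satisfies all the defining constraints. First, the marginal constraints: for each $i$ and each $f^i \in C_b(X^i)$, the function $x \mapsto f^i(x^i)$ is in $C_b(X)$, so $\int_X f^i(x^i)\,d\gamma_k \to \int_X f^i(x^i)\,d\gamma$; since the left side equals $\int_{X^i} f^i\,d\mu^i$ for every $k$, the limit gives $\int_X f^i(x^i)\,d\gamma = \int_{X^i} f^i\,d\mu^i$, i.e.\ $\gamma$ has $i$-th marginal $\mu^i$. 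Hence $\gamma \in \Pi(\mu^1,\dots,\mu^n)$. Second, the extra linear constraints: recall $Q \subset C_b(X)$, so for each $q \in Q$ we have $\int_X q\,d\gamma_k \to \int_X q\,d\gamma$; since $\int q\,d\gamma_k = 0$ for all $k$, we get $\int q\,d\gamma = 0$. Therefore $\gamma \in \Pi^Q(\mu^1,\dots,\mu^n)$, proving the set is closed.

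Combining: $\Pi^Q(\mu^1,\dots,\mu^n)$ is a closed subset of the weakly compact set $\mathcal{P}(X)$, hence weakly compact. I do not expect any serious obstacle here; the one small point worth being careful about is whether one wants the domains $X^i$ themselves to be compact or merely bounded. If they are only bounded (not closed), then $\mathcal{P}(X)$ need not be weakly closed and one should either assume the $X^i$ are compact, or pass to $\overline{X^i}$ and observe that each $\mu^i$, being supported on $X^i$, forces any weak limit $\gamma$ to be supported on $\overline{X^1}\times\cdots\times\overline{X^n}$ with the correct marginals — and in the discrete specialization that the rest of the paper uses, $X$ is finite and $\mathcal{P}(X)$ is literally a compact simplex, so $\Pi^Q$ is a closed (polyhedral) subset of it and compactness is immediate. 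I would state the lemma for compact (or finite) $X^i$ and give the short closedness argument above.
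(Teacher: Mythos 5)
Your proof is correct and takes essentially the same route as the paper: compactness of an ambient set plus closedness of $\Pi^Q$ under weak limits, using that the constraints are tested against functions in $C_b$. The paper simply cites the standard weak compactness of $\Pi(\mu^1,\dots,\mu^n)$ and checks only the $Q$-constraints, whereas you also re-verify the marginal constraints and compactness of $\mathcal{P}(X)$ (and sensibly flag the bounded-versus-compact domain point, which is harmless in the discrete setting the paper works in).
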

\begin{proof}
Since $\Pi(\mu^1, ..., \mu^n)$ is a weakly compact set, we only need to show that $\Pi^Q$ is a closed subset of $\Pi$.
If $\ga_k \to \ga_\infty \in \Pi$ weakly, then
\be
    \int_X q d\ga_\infty = \lim_{k \to \infty} \int_X q d\ga_k = 0, \qquad \forall q \in Q \nn
\ee
since $Q \subset C_b(X)$. Hence $\ga_\infty \in \Pi^Q$.
\end{proof}
    
Given a lower semi-continuous cost $c$. Let $\eta_k \downarrow 0$ and $\ga_{\eta_k}$ solve:
\be \label{eqn: regularized minimizers}
    \inf_{\ga \in \Pi^Q(\mu^1,...,\mu^n)} \int_X c d\ga + \eta_k H_{\ot_{i=1}^n \mu^i} (\ga).
\ee
Note that for general marginals, it is often the case that all minimizers of \eqref{eqn: regularized minimizers} have infinite entropy.  To make sense of the statement that cluster points minimize the entropy among solutions to the unregularized problem, we therefore require an additional assumption (uniform boundedness of the entropy in the Lemma below).  This assumption is certainly satisfied in the discrete case, which is the main focus of this paper.
\begin{lemma}\label{lem: convg min ent sol}
Suppose that $\eta_k$ is a sequence converging to $0$ such that the minimizers $\ga_{\eta_k}$ in \eqref{eqn: regularized minimizers} converge weakly to some $\ga_0 = \lim \ga_{\eta_k}$, and that the entropy of the $\ga_{\eta_k}$ is uniformly bounded, that is, $\exists M > 0$ such that $\forall k, H_{\ot_{i=1}^n \mu^i} (\ga_{\eta_k}) < M$.
Then $\ga_0$ solves the unregularized optimal transport problem with additional linear constraints \eqref{eqn: unregularized primal}.  Moreover, $\gamma_0$ has minimal entropy among all solutions of \eqref{eqn: unregularized primal}:
$$
\gamma_0\in argmin_{\gamma \text{ solves } \eqref{eqn: unregularized primal}} H_{\ot_{i=1}^n \mu^i}(\gamma).
$$

\end{lemma}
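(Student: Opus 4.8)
The plan is a standard penalization (or $\Gamma$-convergence) argument, resting on two structural facts: the weak compactness of $\Pi^Q(\mu^1,\dots,\mu^n)$ from Lemma \ref{LinCtrSetCompact}, and the weak lower semicontinuity of relative entropy $\ga \mapsto H_{\ot_{i=1}^n\mu^i}(\ga)$ with respect to the fixed reference measure. First I would check that the limit is admissible: each $\ga_{\eta_k}$ lies in the weakly closed set $\Pi^Q(\mu^1,\dots,\mu^n)$, so $\ga_0 \in \Pi^Q(\mu^1,\dots,\mu^n)$; and by weak lower semicontinuity of the entropy together with the hypothesised uniform bound, $H_{\ot_{i=1}^n\mu^i}(\ga_0) \le \liminf_k H_{\ot_{i=1}^n\mu^i}(\ga_{\eta_k}) \le M < \infty$.

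Next I would prove that $\ga_0$ minimizes \eqref{eqn: unregularized primal}. Write $V := \inf_{\ga \in \Pi^Q(\mu^1,\dots,\mu^n)} \int_X c\,d\ga$; feasibility of $\ga_0$ gives $\int_X c\,d\ga_0 \ge V$ directly. For the reverse inequality, fix an admissible competitor $\ga$ with $H_{\ot_{i=1}^n\mu^i}(\ga) < \infty$. Since relative entropy of one probability measure with respect to another is nonnegative (Jensen), optimality of $\ga_{\eta_k}$ in \eqref{eqn: regularized minimizers} gives
\be
\int_X c\,d\ga_{\eta_k} \;\le\; \int_X c\,d\ga_{\eta_k} + \eta_k H_{\ot_{i=1}^n\mu^i}(\ga_{\eta_k}) \;\le\; \int_X c\,d\ga + \eta_k H_{\ot_{i=1}^n\mu^i}(\ga). \nn
\ee
Because $c$ is lower semicontinuous and bounded below (automatic in the discrete case), the map $\ga \mapsto \int_X c\,d\ga$ is weakly lsc, so letting $k \to \infty$ (and $\eta_k \to 0$) yields $\int_X c\,d\ga_0 \le \liminf_k \int_X c\,d\ga_{\eta_k} \le \int_X c\,d\ga$. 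Taking the infimum over all such $\ga$ — which, in the discrete setting where every admissible measure has finite entropy, equals $V$ — we conclude $\int_X c\,d\ga_0 = V$, i.e.\ $\ga_0$ solves \eqref{eqn: unregularized primal}.

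For the minimality of the entropy, let $\ga^\ast$ be any minimizer of \eqref{eqn: unregularized primal}; if $H_{\ot_{i=1}^n\mu^i}(\ga^\ast) = \infty$ there is nothing to prove, so assume it is finite. Then $\int_X c\,d\ga^\ast = V \le \int_X c\,d\ga_{\eta_k}$, and substituting this into the optimality inequality $\int_X c\,d\ga_{\eta_k} + \eta_k H_{\ot_{i=1}^n\mu^i}(\ga_{\eta_k}) \le V + \eta_k H_{\ot_{i=1}^n\mu^i}(\ga^\ast)$ forces $H_{\ot_{i=1}^n\mu^i}(\ga_{\eta_k}) \le H_{\ot_{i=1}^n\mu^i}(\ga^\ast)$ for every $k$. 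Passing to the $\liminf$ and again invoking weak lsc of the entropy, $H_{\ot_{i=1}^n\mu^i}(\ga_0) \le \liminf_k H_{\ot_{i=1}^n\mu^i}(\ga_{\eta_k}) \le H_{\ot_{i=1}^n\mu^i}(\ga^\ast)$. Since $\ga_0$ is itself a minimizer by the previous step, this is precisely $\ga_0 \in \operatorname{argmin} H_{\ot_{i=1}^n\mu^i}$ over the set of minimizers of \eqref{eqn: unregularized primal}.

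I expect the argument to be essentially bookkeeping once the two structural facts are in hand; the only genuine subtlety, which the paper itself flags, is the passage in the optimality step from the infimum of $\int_X c\,d\ga$ over all admissible $\ga$ to the infimum over those of finite entropy — a non-issue in the discrete case (all admissible measures have finite entropy), which is the setting of primary interest. The remaining ingredients — weak lower semicontinuity of $\ga\mapsto H_{\ot_{i=1}^n\mu^i}(\ga)$ for a fixed reference measure, and of $\ga\mapsto\int_X c\,d\ga$ for lower semicontinuous $c$ bounded below — are standard and require no new ideas.
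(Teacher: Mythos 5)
Your proposal is correct and follows essentially the same argument as the paper: pass to the limit in the optimality inequality for $\ga_{\eta_k}$ against finite-entropy competitors to get optimality of $\ga_0$ (using closedness of $\Pi^Q$ and lower semicontinuity of the cost and entropy), then compare with an arbitrary finite-entropy minimizer to extract $H_{\ot_{i=1}^n\mu^i}(\ga_{\eta_k}) \le H_{\ot_{i=1}^n\mu^i}(\ga^\ast)$ and conclude by lower semicontinuity of the entropy. The only cosmetic difference is that you drop the $\eta_k H_{\ot_{i=1}^n\mu^i}(\ga_{\eta_k})$ term via nonnegativity of relative entropy where the paper invokes the uniform bound, and you explicitly flag the finite-entropy-competitor restriction that the paper handles the same way.
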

\begin{proof}
By the assumption of $\ga_{\eta_k}$, for any measure $\ga \in \Pi^Q$ such that $H_{\ot_{i=1}^n \mu^i}(\ga) < \infty$, we have
\be
    \int_X c d\ga_{\eta_k} + \eta_k H_{\ot_{i=1}^n \mu^i} (\ga_{\eta_k}) \leq \int_X c d\ga + \eta_k H_{\ot_{i=1}^n \mu^i} (\ga). \nn
\ee
By taking the limit of both side for $k$, with the lower semi-continuity of the cost and uniformly boundedness of the relative entropy of $\ga_{\eta_k}$, we thus have:
\be
    \int_X c d\ga_0 \leq \int_X c d\ga. \nn
\ee
Note that by Lemma \ref{LinCtrSetCompact}, $\ga_0 \in \Pi^Q$. Therefore $\ga_0$ solve the original optimal transport problem with extra linear constraints.  

Now, suppose $\bar \ga \in \Pi^Q$ is another optimal measure that solves the unregularized problem with $H_{\ot_{i=1}^n \mu^i} (\bar \ga) < \infty$. Again by the optimality assumption of $\ga_{\eta_k}$,
\be\label{optmConvgeProofIneq1}
    \int_X c d\ga_{\eta_k} + \eta_k H_{\ot_{i=1}^n \mu^i} (\ga_{\eta_k}) \leq \int_X c d\bar \ga + \eta_k H_{\ot_{i=1}^n \mu^i} (\bar \ga).
\ee
The optimality assumption for $\bar \ga$ to the unregularized problem implies
\be\label{optmConvgeProofIneq2}
    \int_X c d\ga_{\eta_k} \geq \int_X c d\bar \ga. 
\ee
Combining \eqref{optmConvgeProofIneq1} and \eqref{optmConvgeProofIneq2} we get
\be
    H_{\ot_{i=1}^n \mu^i} (\ga_{\eta_k}) \leq H_{\ot_{i=1}^n \mu^i} (\bar \ga). \nn
\ee
Taking the limit on both sides again and by the lower semi-continuity of the relative entropy functional \cite{Posner1975}, we deduce that
\be
    H_{\ot_{i=1}^n \mu^i} (\ga_0) \leq H_{\ot_{i=1}^n \mu^i} (\bar \ga). \nn
\ee
This means $\ga_0$ has the minimum relative entropy  among all optimal measures.
\end{proof}

In the following, we prove the convergence of the dual potentials on discrete domain. We adapt to the simplified notation to represent the entropic optimal transport problem with extra linear constraints introduced in section \ref{OTexLinCtrLP}:
\be\label{AppOTexLinCtrLP}
    \min  -b^T v + \eta \sum_{\ell=1}^m \et\lt(\f{A_\ell v - \ep c_\ell}{\eta} \rt) \boldsymbol{\mu}_\ell,
\ee
where the matrix $A$ has full column rank. We assume the set of solutions to the dual unregularized optimal transport problem with extra linear constraints is non-empty and bounded in the sense that if 
\be\label{eq: unregularized discrete dual problem}
    D_0 = \inf  -b^T v 
\ee
subject to $A_\ell v \leq c_\ell$ and
\be
    V_0 \coloneqq \lt\{ v \in \R^d | A_\ell v \leq c_\ell, -b^T v = D_0 \rt\}, \nn
\ee
then there exist an $M > 0$ such that $||v|| < M, \forall v \in V_0$. Moreover we assume that $\forall \ell, c_\ell \geq 0$. Let $v_\eta$ be the optimal solution to the \eqref{AppOTexLinCtrLP} with parameter $\eta$ and $\eta_k \downarrow 0$. We denote $v_k = v_{\eta_k}$ for simplicity.
\begin{proposition}
    The sequence $\{v_k\}_k$ is bounded.  If it converges to some $v_0 = \lim v_k$, then $v_0$ is optimal in \eqref{eq: unregularized discrete dual problem}.
\end{proposition}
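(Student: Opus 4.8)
The plan is to adapt the classical analysis of exponential penalty methods for linear programming (cf.\ \cite{Cominetti1994}); beyond a few soft estimates, the one substantive ingredient is the assumed boundedness of the unregularized solution set $V_0$. Write $\Phi_\eta(v):=-b^Tv+\eta\sum_{\ell=1}^m\et\big(\f{1}{\eta}(A_\ell v-c_\ell)\big)\boldsymbol{\mu}_\ell$ for the objective in \eqref{AppOTexLinCtrLP} (absorbing $\ep$ into $c$; we may also assume $\boldsymbol{\mu}_\ell>0$ for every $\ell$), so $v_k$ minimizes $\Phi_{\eta_k}$, and put $(\ga_k)_\ell:=\et\big(\f{1}{\eta_k}(A_\ell v_k-c_\ell)\big)\boldsymbol{\mu}_\ell$. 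The first step is to record a key identity: the optimality condition $\grad\Phi_{\eta_k}(v_k)=0$ is precisely $A^T\ga_k=b$, which --- $A$ having full column rank by Lemma \ref{lem: reduction of A} --- is equivalent to $\ga_k\in\Pi^Q(\mu^1,\dots,\mu^n)$ (equivalently, $\ga_k$ is the optimal primal point, there being no duality gap \cite{Cominetti1994}). In particular $\sum_\ell(\ga_k)_\ell=1$, so $\Phi_{\eta_k}(v_k)=-b^Tv_k+\eta_k$, and $0\le(\ga_k)_\ell\le1$ for each $\ell$; these facts will be used repeatedly below.

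Next I would bound $\{b^Tv_k\}$ from both sides. For an upper bound on $\Phi_{\eta_k}(v_k)$: for any $\bar v\in V_0$ one has $A_\ell\bar v\le c_\ell$ and $-b^T\bar v=D_0$, so $\Phi_{\eta_k}(v_k)\le\Phi_{\eta_k}(\bar v)\le D_0+\eta_k\sum_\ell\boldsymbol{\mu}_\ell=D_0+\eta_k$, and with the identity above this gives $-b^Tv_k\le D_0$. For the opposite direction I would use weak duality: fixing any $\bar\ga\in\Pi^Q(\mu^1,\dots,\mu^n)$ (nonempty by hypothesis, of finite entropy since the problem is discrete), the elementary bound $\et(t)\ge a\,(t+1-\log a)$ with $a=(\bar\ga)_\ell/\boldsymbol{\mu}_\ell$, summed termwise, yields $\Phi_{\eta_k}(v_k)\ge-c^T\bar\ga+\eta_k\big(1-H_{\ot_{i=1}^n\mu^i}(\bar\ga)\big)$, hence $b^Tv_k\le c^T\bar\ga+\eta_k H_{\ot_{i=1}^n\mu^i}(\bar\ga)$. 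Since $\eta_k\downarrow0$, the sequence $\{b^Tv_k\}$ is bounded.

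The crux is to upgrade this to boundedness of $\{v_k\}$ itself. Suppose not; pass to a subsequence along which $\|v_k\|\to\infty$ and $v_k/\|v_k\|\to d$ with $\|d\|=1$. From $(\ga_k)_\ell\le1$ we get $A_\ell v_k\le c_\ell-\eta_k\log\boldsymbol{\mu}_\ell$ for every $\ell$; dividing by $\|v_k\|$ and sending $k\to\infty$ gives $A_\ell d\le0$ for all $\ell$, while boundedness of $\{b^Tv_k\}$ forces $b^Td=0$. But then, for any $\bar v\in V_0$ and all $t\ge0$, the point $\bar v+td$ satisfies $A_\ell(\bar v+td)\le c_\ell$ and $-b^T(\bar v+td)=D_0$, i.e.\ $\bar v+td\in V_0$; letting $t\to\infty$ contradicts the boundedness of $V_0$. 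Hence $\{v_k\}$ is bounded. I expect this recession-direction argument --- the sole place where the boundedness hypothesis on $V_0$ is genuinely needed --- to be the main obstacle; everything around it is routine.

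Finally, suppose $v_k\to v_0$. Passing to the limit in $A_\ell v_k\le c_\ell-\eta_k\log\boldsymbol{\mu}_\ell$ gives $A_\ell v_0\le c_\ell$ for every $\ell$, so $v_0$ is feasible for \eqref{eq: unregularized discrete dual problem} and therefore $-b^Tv_0\ge D_0$; passing to the limit in $-b^Tv_k\le D_0$ gives the reverse inequality. Thus $-b^Tv_0=D_0$, i.e.\ $v_0\in V_0$, which is the asserted optimality. (The same argument applied to arbitrary subsequential limits shows that every cluster point of $\{v_k\}$ lies in $V_0$, so the full sequence converges whenever $V_0$ is a singleton.)
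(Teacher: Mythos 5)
Your proof is correct and follows essentially the same strategy as the paper's: compare the optimal value $\Phi_{\eta_k}(v_k)$ with the value at some $\bar v\in V_0$, deduce uniform bounds of the form $A_\ell v_k\le c_\ell+o(1)$ together with control of $b^Tv_k$, rule out $\|v_k\|\to\infty$ by extracting a recession direction $d\neq 0$ with $Ad\le 0$ and $-b^Td\le 0$, which would make $V_0$ unbounded, and then pass to the limit to get feasibility and optimality of $v_0$. The only difference is bookkeeping: you derive the intermediate estimates from the first-order condition $A^T\ga_k=b$ (so that $\ga_k$ is a probability vector, giving $A_\ell v_k\le c_\ell-\eta_k\log\boldsymbol{\mu}_\ell$) plus a weak-duality bound, whereas the paper reads them off directly from the single inequality $\Phi_{\eta_k}(v_k)\le\Phi_{\eta_k}(\bar v)$; both routes are sound.
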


\begin{proof}
     Let $\bar v \in V_0$. Then for all $k$,
    \be\label{AppOTexLinCtrDualUpperBdd}
        -b^T v_k + \eta_k \sum_{\ell=1}^m \et\lt(\f{A_\ell v_k - c_\ell}{\eta_k} \rt) \boldsymbol{\mu}_\ell \leq -b^T \bar v + \eta_k \sum_{\ell=1}^m \et\lt(\f{A_\ell \bar v - c_\ell}{\eta_k} \rt) \boldsymbol{\mu}_\ell \leq -b^T \bar v + \eta_k \leq  \bar M
    \ee
    for some $\bar M > 0$. The second inequality comes from the fact that $A_\ell \bar v \leq c_\ell$ and hence the exponential term is less than 1. Since exponential function and $\boldsymbol{\mu}_\ell$ are always positive, from \eqref{AppOTexLinCtrDualUpperBdd} we obtain for all $\ell$:
    \be
        A_\ell v_k \leq \eta_k \log \lt[\f{1}{\eta_k} (\bar M + b^T v_k) \rt] + c_\ell. \nn
    \ee
    If $\{v_k\}_k$ is unbounded and $||v_k|| \to \infty$ as $k \to \infty$ and without loss of generality if $v_k / ||v_k|| \to \hat v \ne 0$, then
    \be
        A_\ell \f{v_k}{||v_k||} \leq \f{\eta_k}{||v_k||} \log \lt[\f{1}{\eta_k} (\bar M + b^T v_k) \rt]  + \f{c_\ell}{||v_k||} \to 0. \nn 
    \ee
    We can conclude that $A \hat v \leq 0$. Moreover, the positive of exponential function and $\mu_i$ also implies that $-b^T \hat u \leq 0$ by the same argument. Therefore, $\bar u + \lambda \hat u$ is also an optimal solution to the unregularized problem which violate the boundedness of $V_0$.

    Assuming $v_k \to v_0$ as $k \to \infty$. Since $\{v_k\}_k$ is bounded, we can find another constant $\hat M > 0$ such that
    \begin{align*}
        \eta_k \sum_\ell \et\lt(\f{A_\ell v_k - \ep c_\ell}{\eta_k} \rt)  \boldsymbol{\mu}_\ell &\leq \hat M\\
        A_\ell v_k &\leq \eta_k \log \lt(\f{\hat M}{\eta_k} \rt) + c_\ell.
    \end{align*}
    By taking $k \to \infty$ for both sides, we deduce that $A_i u_0 \leq c_i$. Again, by \eqref{AppOTexLinCtrDualUpperBdd}:
    \be
        -b^T v_0 \leq \limsup_{k \to \infty} -b^T v_k + \eta_k \sum_{\ell=1}^m \et\lt(\f{A_\ell v_k - c_\ell}{\eta_k} \rt)  \boldsymbol{\mu}_\ell  \leq -b^T \bar v. \nn
    \ee
    Therefore, we can conclude that $v_0$ is also an optimal solution to the unregularized problem.    
    \end{proof}

\section{Calculation of the second derivatives of $C(\ep)$}\label{AppC2ndDerCal}
Recall:
\be
    C''(\ep) = \lt[ \grad_u\f{\p}{\p \ep}\Phi(u(\ep), \ep)\rt]^T \lt[ -D^2_{uu} \Phi(u(\ep), \ep) \rt]^{-1}\lt[ \grad_u\f{\p}{\p \ep}\Phi(u(\ep), \ep)\rt] + \f{\p^2}{\p \ep^2}\Phi(u(\ep), \ep). \label{AppOT2ndorder}
\ee
Let
\be
    e_{rs} = \et\lt[\foe (u_r - \ep c_{rs}) \rt] \mu_r \nn
\ee
and $e_{rs}^c = e_{rs}  c_{rs}, e_{rs}^{c2} = e_{rs} c_{rs}^2 $. It is easy to deduce all the second derivative of $\Phi$:
\begin{align}
    \f{\p^2}{\p \ep^2}\Phi(u(\ep), \ep) &= \foe \sum_s \lt[\f{\sum_r e_{rs}^{c2}}{\sum_r e_{rs}} - \f{\sum_r e_{rs}^c \cdot \sum_r e_{rs}^c}{(\sum_r e_{rs})^2} \rt] \nu_s \nn \\
    \lt(\grad_u \f{\p}{\p \ep}\Phi(u(\ep), \ep)\rt)_i &= -\foe \sum_s  \lt[ \f{e_{is}^c}{\sum_r e_{rs}} - \f{e_{is} \cdot \sum_r e_{rs}^c}{(\sum_r e_{rs})^2} \rt]\ \nu_s \nn\\
    \biggl(D^2_{uu} \Phi(u(\ep), \ep)\biggr)_{i_1i_2} &= \foe \sum_s \lt[ \f{e_{i_1s}}{\sum_r e_{rs}} \delta_{i_1 i_2} - \f{e_{i_1s} \cdot e_{i_2s}}{(\sum_r e_{rs})^2} \rt] \nu_s, \nn
\end{align}
where $\delta_{ip}$ is the Kronecker delta.

Let us calculate the \eqref{AppOT2ndorder} separately. For the second part, since $e_{ij} = \mu_i$ when $\ep = 0$, then:
\be\label{AppOTcost2ndder1}
    \f{\p^2}{\p \ep^2}\Phi(u(0), 0) = \foe \lt( \E[c^2(X, Y)] - \E\lt[(\E[c(X, Y)| Y])^2\rt] \rt).
\ee

Let's deal with the first part of \eqref{AppOT2ndorder}. With the assumption that $u_0 \equiv 0$, then when $\ep = 0$, for every $i \neq 0$
\be
    \lt(\grad_u \f{\p}{\p \ep}\Phi(u(0), 0)\rt)_i = \f{\mu_i}{\eta} \bigl( \E[c(X, Y)] -  \E[c(X, Y)| X = x_i]\bigr) \nn
\ee
and
\be
    -D^2_{uu} \Phi(u(0), 0) = \foe \bigl(-\text{diag}(\bar \mu) + \bar \mu {\bar \mu}^T \bigr), \nn
\ee
where $\bar \mu = \{\mu_i\}_{i \neq 0}$. Note that, by Sherman-Morrison formula, we have:
\begin{align*}
    \lt[-D^2_{uu} \Phi(u(0), 0) \rt]^{-1} &= \eta \lt(-\text{diag}^{-1}(\bar \mu) - \f{\text{diag}^{-1}(\bar \mu)\bar \mu {\bar \mu}^T \text{diag}^{-1}(\bar \mu)}{1 - {\bar \mu}^T \text{diag}^{-1}(\bar \mu)\bar \mu} \rt)\\
    &= \eta \lt(-\text{diag}^{-1}(\bar \mu) - \f{\mathbbm{1} \mathbbm{1}^T}{\mu_0} \rt),
\end{align*}
where $\mathbbm{1}$ is a vector with all entries 1 and $\text{diag}^{-1}(\bar \mu)$ is a diagonal matrix where each diagonal entry is the inverse of $\mu_i$. Therefore
\begin{align*}
     &\lt[ -D^2_{uu} \Phi(u(0), 0) \rt]^{-1}\lt[ \grad_u\f{\p}{\p \ep}\Phi(u(0), 0)\rt] \\
     =& \eta \lt(-\text{diat}^{-1}(\bar \mu) - \f{\mathbbm{1} \mathbbm{1}^T}{\mu_0} \rt) \lt(  \f{\mu_i}{\eta} \bigl( \E[c(X, Y)] -  \E[c(X, Y)| X = x_i]\bigr) \rt)_{i \neq 0}\\
     =& \biggl(\E[c(X, Y)| X = x_i] - \E[c(X, Y)] \biggr)_{i \neq 0}\\ 
     & \qquad \qquad + \lt(\f{1}{\mu_0} \sum_{n \neq 0} \lt[ \E[c(X, Y)| X = x_i] - \E[c(X, Y)]\rt]  \rt)_{i \neq 0}\\
     =& \bigl( \E[c(X, Y)| X = x_i] - \E[c(X, Y)] + \E[c(X, Y)] -\E[c(X, Y)| X = x_0]\bigr)_{i \neq 0}\\
     =& \bigl( \E[c(X, Y)| X = x_i] -\E[c(X, Y)| X = x_0]\bigr)_{i \neq 0}
\end{align*}
and
\begin{align}
    &\lt[ \grad_u\f{\p}{\p \ep}\Phi(u(\ep), \ep)\rt]^T \lt[ -D^2_{uu} \Phi(u(\ep), \ep) \rt]^{-1}\lt[ \grad_u\f{\p}{\p \ep}\Phi(u(\ep), \ep)\rt] \nn\\
    =& \lt(  \f{\mu_i}{\eta} \bigl( \E[c(X, Y)] -  \E[c(X, Y)| X = x_i]\bigr) \rt)_{i \neq 0}^T \nn\\
    & \qquad \qquad \cdot \bigl( \E[c(X, Y)| X = x_i] -\E[c(X, Y)| X = x_0]\bigr)_{i \neq 0} \nn\\
    =& \foe \sum_{i \neq 0} \bigl[ \E[c(X, Y)]\E[c(X, Y)| X = x_i] - \E[c(X, Y)]\E[c(X, Y)| X = x_0] \nn\\
    &\qquad - \lt( \E[c(X, Y)| X = x_i] \rt)^2 + \E[c(X, Y)| X = x_0]\E[c(X, Y)| X = x_i]\bigr] \nn\\
    =& \foe \biggl\{ \E[c(X, Y)]\sum_i \E[c(X, Y)| X = x_i] \mu_i - \E[c(X, Y)]\sum_i \E[c(X, Y)| X = x_0] \mu_i \nn\\   
    & \qquad - \sum_i \lt(\E[c(X, Y)| X = x_i] \rt)^2 \mu_i + \E[c(X, Y)| X = x_0] \sum_i \E[c(X, Y)| X = x_i]\mu_i\biggr\} \nn\\
    & \qquad - \foe\biggl\{ \E[c(X, Y)]\E[c(X, Y)| X = x_0] - \E[c(X, Y)]\E[c(X, Y)| X = x_0] \nn\\
    & \qquad \qquad - \lt( \E[c(X, Y)| X = x_0] \rt)^2 + \E[c(X, Y)| X = x_0]\E[c(X, Y)| X = x_0] \biggr\} \mu_0 \nn\\
    =&\foe \biggl( \lt(\E[c(X, Y)] \rt)^2 - \E\lt[(\E[c(X, Y)| X])^2\rt]\biggr). \label{AppOTcost2ndder2}
\end{align}
Hence, summing \eqref{AppOTcost2ndder1} and \eqref{AppOTcost2ndder2} we get:
\be
    C''(0) = \foe \biggl( \lt(\E[c(X, Y)] \rt)^2 + \E[c^2(X, Y)] - \E \lt[(\E[c(X, Y)| X])^2\rt] - \E \lt[(\E[c(X, Y)| Y])^2\rt]\biggr). \nn
\ee

\subsection{The second order derivatives of $\Phi(u, v,\ep)$ for multi marginal optimal transport}\label{AppMMargOT2ndDerSec}

Recall:
\be
    \Phi(u, v, \ep) = -\sum_r u_r \mu_r - \sum_s v_s \tht_s + \eta \sum_t \log \lt[\sum_{r,s} \et \lt[\foe (u_r + v_s - \ep c_{rst}) \rt] \mu_r \tht_s \rt] \nu_{rst} - \eta. \nn
\ee

Let us denote
\be
    e_{ijk} = \et \lt[\foe (u_i + v_j - \ep c_{ijk}) \rt], \qquad e_{ijk}^c = \et \lt[\foe (u_i + v_j - \ep c_{ijk}) \rt] c_{ijk}.\nn
\ee
The the Hessian matrix of $\Phi$ will be the symmetric matrix:
\be
   D^2 \Phi = 
    \begin{bmatrix}
        D^2_{uu} \Phi & D^2_{uv} \Phi \\
        D^2_{vu} \Phi & D^2_{vv} \Phi \\
    \end{bmatrix}, \nn
\ee
where:
\begin{align*}
    \biggl(D^2_{uu} \Phi(u(\ep), v(\ep), \ep)\biggr)_{i_1i_2} &= \foe \sum_t \lt[ \f{\sum_s e_{i_1st}}{\sum_{r,s} e_{rst}} \delta_{i_1i_2} - \f{\sum_s e_{i_1st} \cdot \sum_s e_{i_2st}}{(\sum_{r,s}e_{rst})^2} \rt] \nu_t\\
    \biggl(D^2_{vv} \Phi(u(\ep), v(\ep), \ep)\biggr)_{j_1j_2} &= \foe \sum_t \lt[ \f{\sum_r e_{rjt}}{\sum_{r,s}e_{rst}} \delta_{j_1j_2} - \f{\sum_r e_{rj_1t} \cdot \sum_r e_{rj_2t}} {(\sum_{r,s} e_{rst})^2} \rt] \nu_t\\
    \biggl(D^2_{uv} \Phi(u(\ep), v(\ep), \ep)\biggr)_{ij} &= \foe \sum_t \lt[ \f{e_{ijt}}{\sum_{r,s}e_{rst}} - \f{\sum_s e_{ist} \cdot \sum_r e_{rjt} }{(\sum_{r,s} e_{rst})^2} \rt]\nu_t
\end{align*}
and the mixed derivative is:
\be
    \grad \f{\p}{\p \ep}\Phi = 
    \begin{bmatrix}
        \grad_u \f{\p}{\p \ep}\Phi\\
        \grad_v \f{\p}{\p \ep}\Phi \\
    \end{bmatrix},
\ee
where:
\begin{align*}
    \lt(\grad_u \f{\p}{\p \ep}\Phi(u(\ep), v(\ep), \ep)\rt)_i &= -\foe \sum_t  \lt[ \f{\sum_s e_{ist}^c}{\sum_{r,s} e_{rst}} - \f{\sum_s e_{ist} \cdot \sum_{r,s} e_{rst}^c}{(\sum_{r,s} e_{rst})^2} \rt]\nu_t\\
    \lt(\grad_v \f{\p}{\p \ep}\Phi(u(\ep), v(\ep), \ep)\rt)_j &= - \foe \sum_t \biggl[\f{\sum_r e_{rjt}^c}{\sum_{r,s} e_{rst}} - \f{\sum_r e_{rjt} \cdot \sum_{r,s} e_{rst}^{c}}{(\sum_{r,s} e_{rst})^2} \biggr] \nu_\ell.
\end{align*}

\subsection{The second order derivatives of $\Phi(v, g,\ep)$ for martingale optimal transport}\label{AppMOT2ndDerSec}

Recall:
\be
    \Phi(v, g, \ep) \coloneqq -\sum_m v_m \nu_m + \eta \sum_n \log\lt[ \sum_m \et \lt[ \foe(v_m + g_n(y_m - x_n) - \ep c_{nm}) \rt] \nu_m \rt]\mu_n - \eta. \nn
\ee

Let denote 
\be
    e_{ij} = \et \lt[\foe(v_j + h_i(y_j - x_i) - \ep c_{ij})\rt] \nu_j \nn
\ee
and $e_{ij}^{xy} = e_{ij}(y_j - x_i), e_{ij}^{xy2} = e_{ij}(y_j - x_i)^2, e_{ij}^{c} = e_{ij} c_{ij}$ and $e_{ij}^{xyc} = e_{ij}(y_j - x_i)c_{ij}$

The Hessian matrix for $\Phi$ will be the symmetric matrix:
\be
    D^2 \Phi = 
    \begin{bmatrix}
        D^2_{vv} \Phi & D^2_{vh} \Phi \\
        D^2_{hv} \Phi & D^2_{hh} \Phi \\
    \end{bmatrix}, \nn
\ee
where:
\begin{align*}
    \biggl(D^2_{vv} \Phi(v(\ep), h(\ep), \ep)\biggr)_{j_1 j_2} &= \foe \sum_r \lt[ \f{e_{rj}}{\sum_s e_{rs}} \delta_{j_1 j_2} - \f{e_{rj_1} \cdot e_{rj_2}}{(\sum_s e_{rs})^2} \rt] \mu_r\\
    \biggl(D^2_{hh} \Phi(v(\ep), h(\ep), \ep)\biggr)_{i_1 i_2} &= \foe \lt[ \f{\sum_s e_{i_1s}^{xy2}}{\sum_s e_{i_1s}} - \f{\sum_s e_{i_1s}^{xy} \cdot \sum_s e_{i_2s}^{xy}}{(\sum_s e_{i_1s})^2} \rt] \mu_{i_1} \delta_{i_1 i_2}\\
    \biggl(D^2_{vh} \Phi(v(\ep), h(\ep), \ep)\biggr)_{ij} &= \foe \lt[ \f{e_{ij}}{\sum_s e_{is}} - \f{e_{ij} \cdot \sum_s e_{is}^{xy} }{(\sum_s e_{is})^2} \rt]\mu_i,
\end{align*}
where $\delta_{i \ell}$ is the Kronecker delta. While the mixed derivative:
\be
    \grad \f{\p}{\p \ep}\Phi = 
    \begin{bmatrix}
        \grad_v \f{\p}{\p \ep}\Phi\\
        \grad_h \f{\p}{\p \ep}\Phi \\
    \end{bmatrix}, \nn
\ee
where:
\begin{align*}
    \lt(\grad_v \f{\p}{\p \ep}\Phi(v(\ep), h(\ep), \ep)\rt)_j &= -\foe \sum_r  \lt[ \f{e_{rj}^c}{\sum_s e_{rs}} - \f{e_{rj} \cdot \sum_s e_{rs}^c}{(\sum_s e_{rs})^2} \rt]\ \nu_j\\
    \lt(\grad_h \f{\p}{\p \ep}\Phi(v(\ep), h(\ep), \ep)\rt)_i &= - \foe \biggl[\f{\sum_s e_{is}^{xyc}}{\sum_s e_{is}} - \f{\sum_s e_{is}^{xy} \cdot \sum_s e_{is}^{c}}{(\sum_s e_{is})^2} \biggr] \mu_i.
\end{align*}

\subsection{The second order derivatives of $\Phi(v, w, g, h,\ep)$ for multi-period optimal transport}\label{AppMMOT2ndDerSec}

Recall:
\begin{align*}
    \Phi(v, w, g, h, \ep) = &- \sum_s v_s \tht_s - \sum_t w_t \nu_t + \eta \sum_r \log \biggl[ \sum_{s,t} \et \biggl[ \foe \bigl( v_s + w_t\\
    &\qquad + g_r(y_s - x_r) + h_{rs}(z_t - y_s) - \ep c_{rst} \bigr) \biggr] \tht_s \nu_t \biggr] \nu_r - \eta. \nn
\end{align*}

By denoting $e_{ijk} = \et[\foe (v_j + w_k + g_i(y_j - x_i) + h_{ij}(z_k - y_j) - \ep c_{ijk})] \tht_j \nu_k$ and $e_{ijk}^{xy} = e_{ijk} (y_j - x_i), e_{ijk}^{yz} = e_{ijk} (z_k - y_j), e_{ijk}^{xy2} = e_{ijk} (y_j - x_i)^2, e_{ijk}^{xyz} = e_{ijk}(y_j - x_i)(z_k - y_j), e_{ijk}^{yz2} = e_{ijk} (z_k - y_j)^2, e_{ijk}^c = e_{ijk} c_{ijk}, e_{ijk}^{xyc} = e_{ijk} (y_j - x_i) c_{ijk}, e_{ijk}^{yzc} = e_{ijk} (z_k - y_j) c_{ijk}, $ 

the Hessian of $\Phi$ will be the symmetric matrix:
\be
    D^2 \Phi = 
    \begin{bmatrix}
        D^2_{vv} \Phi & D^2_{vw} \Phi & D^2_{vg} \Phi & D^2_{vh} \Phi \\
        D^2_{wv} \Phi & D^2_{ww} \Phi & D^2_{wg} \Phi & D^2_{wh} \Phi \\
        D^2_{gv} \Phi & D^2_{gw} \Phi & D^2_{gg} \Phi & D^2_{gh} \Phi \\
        D^2_{hv} \Phi & D^2_{hw} \Phi & D^2_{hg} \Phi & D^2_{hh} \Phi \\
    \end{bmatrix}, \nn
\ee
where
\begin{align*}
    \lt( D^2_{vv} \Phi \rt)_{j_1j_2} &= \foe \sum_r \lt[ \f{\sum_t e_{rj_1t}}{\sum_{s,t} e_{rst}} \delta_{j_1 j_2} - \f{\sum_t e_{rj_1t} \cdot \sum_t e_{rj_2t}}{(\sum_{s,t} e_{rst})^2} \rt] \mu_r\\
    \lt( D^2_{vw} \Phi \rt)_{jk} &= \foe \sum_r \lt[\f{e_{rjk}}{\sum_{s,t} e_{rst}} - \f{\sum_t e_{rjt} \cdot \sum_s e_{rsk}}{(\sum_{s,t} e_{rst})^2} \rt] \mu_r\\
    \lt( D^2_{vg} \Phi \rt)_{ji} &= \foe \lt[\f{\sum_t e_{ijt}}{\sum_{s,t} e_{ist}} - \f{\sum_t e_{ijt} \cdot \sum_{s,t }e_{ist}^{xy}}{(\sum_{s,t} e_{ist})^2} \rt] \mu_i \\
    \lt( D^2_{vh} \Phi \rt)_{j_1,(ij_2)} &= \foe \lt[\f{\sum_t e_{ij_2}^{yz}}{\sum_{s,t} e_{ist}} \delta_{j_1 j_2} - \f{\sum_t e_{ij_1} \cdot \sum_t e_{ij_2}^{yz}}{(\sum_{s,t} e_{ist})^2} \rt] \nu_i\\
    \lt( D^2_{ww} \Phi \rt)_{k_1k_2} &= \foe \sum_r \lt[\f{\sum_s e_{rk_1}}{\sum_{s,t} e_{rst}} \delta_{k_1 k_2} - \f{\sum_s e_{rsk_1} \cdot \sum_s e_{rsk_2}}{(\sum_{s,t} e_{rst})^2} \rt] \mu_r \\
    \lt( D^2_{wg} \Phi \rt)_{ki} &= \foe \lt[ \f{\sum_s e_{isk}^{xy}}{\sum_{s,t} e_{ist}} - \f{\sum_s e_{isk} \cdot \sum_{s,t} e_{ist}^{xy}}{(\sum_{s,t} e_{ist})^2} \rt] \mu_i\\
    \lt( D^2_{wh} \Phi \rt)_{k,(ij)} &= \foe \lt[\f{e_{ijk}^{yz}}{\sum_{s,t} e_{ist}} - \f{\sum_s e_{isk} \cdot \sum_t e_{ijt}^{yz}}{(\sum_{s,t} e_{ist})^2} \rt] \mu_i \\
    \lt( D^2_{gg} \Phi \rt)_{i_1 i_2} &= \foe \lt[\f{\sum_{s,t} e_{i_1st}^{xy2}}{\sum_{s,t} e_{i_1st}} - \f{\sum_{s,t} e_{i_1st}^{xy} \cdot \sum_{s,t} e_{i_2st}^{xy}}{(\sum_{s,t} e_{i_1st})^2} \rt] \mu_{i_1} \delta_{i_1 i_2}\\
    \lt( D^2_{gh} \Phi \rt)_{i_1,(i_2j)} &= \foe \lt[\f{\sum_t e_{i_1jt}^{xyz}}{\sum_{s,t} e_{ist}} - \f{\sum_{s,t} e_{i_1st}^{xy} \cdot \sum_t e_{i_2jt}^{yz}}{(\sum_{s,t} e_{ist})^2} \rt] \mu_{i_1} \delta_{i_1 i_2} \\
    \lt( D^2_{hh} \Phi \rt)_{(i_1 j_1),(i_2 j_2)} &= \foe \lt[ \f{\sum_t e_{i_1j_1t}^{yz2}}{\sum_{s,t} e_{i_1st}} \delta_{j_1 j_2} - \f{\sum_t e_{i_1 j_1 t}^{yz} \cdot \sum_t e_{i_2 j_2 t}^{yz}}{(\sum_{s,t} e_{ist})^2} \rt] \mu_{i_1} \delta_{i_1 i_2}.
\end{align*}
Here we flatten the matrix $\{h_{ij}\}$ into a long vector and abuse the notation that we still call it with the index $(ij)$. For the mixed second order derivatives:
\be
    \grad \f{\p}{\p \ep}\Phi = 
    \begin{bmatrix}
        \grad_v \f{\p}{\p \ep}\Phi \\
        \grad_w \f{\p}{\p \ep}\Phi \\
        \grad_g \f{\p}{\p \ep}\Phi \\
        \grad_h \f{\p}{\p \ep}\Phi \\
    \end{bmatrix}, \nn
\ee
where
\begin{align*}
    \lt(\grad_v \f{\p}{\p \ep}\Phi \rt)_j &= -\foe \sum_r \lt[\f{\sum_t e_{rjt}^{c}}{\sum_{s,t} e_{rst}} - \f{\sum_t e_{rjt} \cdot \sum_{s,t} e_{rst}^c}{(\sum_{s,t} e_{rst})^2} \rt] \mu_r \\
    \lt(\grad_g \f{\p}{\p \ep}\Phi \rt)_i &= -\foe \lt[\f{\sum_{s,t} e_{ist}^{xyc}}{\sum_{s,t} e_{ist}} - \f{\sum_{s,t} e_{ist}^{xy} \cdot \sum_{s,t} e_{ist}^c}{(\sum_{s,t} e_{ist})^2} \rt] \mu_i\\
    \lt(\grad_w \f{\p}{\p \ep}\Phi \rt)_k &= -\foe \sum_r \lt[\f{\sum_s e_{rsk}^c}{\sum_{s,t} e_{rst}} - \f{\sum_s e_{rsk} \cdot \sum_{st} e_{rst}^c}{(\sum_{s,t} e_{rst})^2} \rt] \mu_r \\
    \lt(\grad_h \f{\p}{\p \ep}\Phi \rt)_{(ij)} &= -\foe \lt[\f{\sum_t e_{ijt}^{yzc}}{\sum_{s,t} e_{ist}} - \f{\sum_t e_{ijt}^{yz} \cdot \sum_{s,t} e_{ist}^c}{(\sum_{s,t} e_{ist})^2} \rt] \mu_i.
\end{align*}

\bibliographystyle{plain}
\bibliography{biblio}

\end{document}